\documentclass[11pt]{article}
\usepackage{authblk}
\usepackage{amsmath}
\usepackage{amssymb}
\usepackage{amsthm}
\usepackage{hyperref}
\usepackage{fullpage}
\usepackage{graphics}
\usepackage{bbm}
\usepackage{dsfont}
\usepackage{enumitem}
\usepackage{caption, subcaption}
\usepackage{tikz,color}
\usetikzlibrary{shapes.misc}
\usepackage{multirow}
\usepackage{mathrsfs}

 \newtheorem{theorem}{Theorem}[section]
 \newtheorem{corollary}[theorem]{Corollary}

 \newtheorem{lemma}[theorem]{Lemma}

 \newtheorem{proposition}[theorem]{Proposition}
 
 \theoremstyle{definition}
 \newtheorem{example}[theorem]{Example}

 \newtheorem{definition}[theorem]{Definition}
 
 \newtheorem{remark}[theorem]{Remark}

%% All macros go here 
\newcommand{\PF}[1]{\mathrm{PF}_{#1}}
\newcommand{\FPF}[1]{\mathrm{FPF}\left( #1 \right)}
 
\newcommand{\OPF}[2]{\mathcal{O}_{\PF{#1}}\left( #2 \right)}
\newcommand{\OFPF}[2]{\mathcal{O}_{\FPF{#1}}\left( #2 \right)}

\newcommand{\OOPF}[1]{\mathcal{O}_{\PF{#1}}}
\newcommand{\OOFPF}[1]{\mathcal{O}_{\FPF{#1}}}

\newcommand{\FibFPF}[2]{\mathcal{O}^{-1}_{\FPF{#1}}\left( #2 \right)}
\newcommand{\FibPF}[2]{\mathcal{O}^{-1}_{\PF{#1}}\left( #2 \right)}

\newcommand{\Ham}[1]{\mathcal{H}\left( #1 \right)}

\newcommand{\inccyc}[2]{#1(#1+1) \cdots #2 12\cdots (#1-1)}
\newcommand{\IncCyc}[2]{\mathrm{IncCyc}^{#1, #2}}
\newcommand{\deccyc}[2]{#1(#1-1) \cdots 1 #2(#2-1) \cdots (#1+1)}
\newcommand{\DecCyc}[2]{\mathrm{DecCyc}^{#1, #2}}

\newcommand{\CycPF}[1]{\mathrm{CycPF}_{#1}}

\newcommand{\comp}[1]{\mathrm{comp}\left( #1 \right)}
\newcommand{\Comp}[1]{\mathrm{Comp}_{#1}}
\newcommand{\Perm}[1]{\mathrm{Perm}\left( #1 \right)}

\newcommand{\inv}[2]{\mathrm{inv}_{#1}\left( #2 \right)}
\newcommand{\InvSeq}[1]{\mathrm{InvSeq\left( #1 \right)}}

\newcommand{\disp}[1]{\mathrm{disp}\left( #1 \right)}

\renewcommand{\thanks}[1]{} %% un-comment this line if you want to hide email addresses

%%%%%%%%%%%%%%%%%%%%%%%%%%%%%%%%%%%%
%%%%%%%%%%%%%TITLE INFO%%%%%%%%%%%%%%%%%
%%%%%%%%%%%%%%%%%%%%%%%%%%%%%%%%%%%%

%title
\title{On friendship and cyclic parking functions}
%authors
\author[1]{Yujia Kang\thanks{Yujia.Kang20@student.xjtlu.edu.cn}}
\author[2]{Thomas Selig\thanks{Thomas.Selig@xjtlu.edu.cn}}
\author[1]{Guanyi Yang\thanks{Guanyi.Yang20@student.xjtlu.edu.cn}}
\author[3]{Yanting Zhang\thanks{Yanting.Zhang@maths.ox.ac.uk}}
\author[2]{Haoyue Zhu\thanks{Haoyue.Zhu18@student.xjtlu.edu.cn}}
%affiliations
\affil[1]{School of Mathematics and Physics, Xi'an Jiaotong-Livepool University} 
\affil[2]{School of Advanced Technology, Xi'an Jiaotong-Livepool University} 
\affil[3]{Mathematical Institute, University of Oxford}
%date
\date{\today}

%%%%%%%%%%%%%%%%%%%%%%%%%%%%%%%%%%%%
%%%%%%%%%DOCUMENT STARTS HERE%%%%%%%%%%%%
%%%%%%%%%%%%%%%%%%%%%%%%%%%%%%%%%%%%

\begin{document}

\maketitle

%TODO later
\begin{abstract}
In parking problems, a given number of cars enter a one-way street sequentially, and try to park according to a specified preferred spot in the street. Various models are possible depending on the chosen rule for \emph{collisions}, when two cars have the same preferred spot. In classical parking functions, if a car's preferred spot is already occupied by a previous car, it drives forward and looks for the first unoccupied spot to park.

In this work, we introduce a variant of classical parking functions, called \emph{friendship parking functions}, which imposes additional restrictions on where cars can park. Namely, a car can only end up parking next to cars which are its friends (friendship will correspond to adjacency in an underlying graph). We characterise and enumerate such friendship parking functions according to their outcome permutation, which describes the final configuration when all cars have parked. We apply this to the case where the underlying friendship graph is the cycle graph. Finally, we consider a subset of classical parking functions, called \emph{cyclic parking functions}, where cars end up in an increasing cyclic order. We enumerate these cyclic parking functions and exhibit a bijection to permutation components.
\end{abstract}

%\noindent\textbf{MSC-classes}: 05A19 (Primary), 05A15, 05A05, 05C30 (Secondary).\\
%\noindent\textbf{Keywords}: parking functions, permutations, Hamiltonian paths.

%%%%%%%%%%%%%%%%%%%%%%%%%%%%%%%%%%%%
%%%%%%%%%%%%%%SECTION 1%%%%%%%%%%%%%%%%
%%%%%%%%%%%%%%%%%%%%%%%%%%%%%%%%%%%%

\section{Introduction}\label{sec:intro}
In this section, we introduce parking functions and friendship parking functions. Throughout the paper, $n$ represents a positive integer, and we denote $[n] := \{1, \cdots, n\}$. Permutations will play a key role in our paper. We denote $S_n$ the set of permutations of the set $[n]$. For $\pi \in S_n$, we write $\pi = \pi_1 \cdots \pi_n$ using standard one-line notation. We also denote $\pi^{-1}$ the inverse permutation of $\pi$. That is, for any $i \in [n]$, $\pi^{-1}_i$ is the unique $j \in [n]$ such that $\pi_j = i$.

% Guanyi
\subsection{Parking functions and their variations}\label{subsec:PF_and_variations}

%Introduce parking problem and classical parking functions. Brief review of existing variations (see~\cite{SeligZhuMVP})%

A \emph{parking preference} is a vector $p = (p_1, \cdots, p_n) \in [n]^n$. We will regard $p_i$ as the preferred spot of car $i$ in a car park with spots labelled $1, \cdots, n$. Given a parking preference $p$, the classical parking process for $p$ is described as follows.

Cars enter the car park sequentially in order $1, \cdots, n$, and initially all spots in the car park are unoccupied. When car $i$ enters, it will occupy (park in) the first spot $k \geq p_i$ which is unoccupied at that point. If no such spot exists, car $i$ exits the car park, having failed to park. We say that $p$ is a \emph{classical parking function} if all $n$ cars are able to park through this process. We denote $\PF{n}$ the set of all classical parking functions with $n$ cars/spots.

The \emph{outcome map} of classical parking functions is the map $\OOPF{n} : \PF{n} \rightarrow S_n$ which associates a permutation $\OPF{n}{p} = \pi = \pi_1 \cdots \pi_n$ of $[n]$ to a classical parking function $p$ as follows. For any $i \in [n]$, we set $\pi_i$ to be the label of the car ending up in spot $i$ after all cars have parked in the classical parking process for $p$.

The \emph{displacement vector} of a classical parking function $p \in \PF{n}$ is defined by $\disp{p} = (d_1, d_2, \cdots, d_n)$, where $d_i := \pi^{-1}_i - p_i$ for all $i \in [n]$. Note that for $i \in [n]$, $d_i$ measures the distance that car $i$ has travelled between its preferred spot $p_i$ and the spot $\pi^{-1}_i$ where it finally parks. We say that $d_i$ is the displacement of car $i$ in the parking function $p$. The total displacement of $p$ is defined by $\vert \disp{p} \vert := \sum_{i=1}^{n} d_i$.

\begin{example}\label{ex:classical_PF}
Consider the parking preference $p = (3,1,1,2)$, and let us illustrate the classical parking process for $p$.
\begin{itemize}
  \item Car $1$ has preference $p_1 = 3$. Spot $3$ is unoccupied at that point, so car $1$ parks in spot $3$ (in fact, car $1$ will always park in its preferred spot), with displacement $d_1 = 0$.
  \item Car $2$ has preference $p_2 = 1$. Spot $1$ is unoccupied at that point, so car $2$ parks in spot $1$, with displacement $d_2 = 0$.
  \item Car $3$ has preference $p_3 = 1$. However, spot $1$ is occupied at this point (by car $2$), so car $3$ cannot park there. Instead, it continues to drive forward and finds the first unoccupied spot after its preference, which is spot $2$, and so car $3$ parks there. The displacement is given by $d_3 = 2 - 1 = 1$.
  \item Lastly, car $4$ has preference $p_4 = 2$. Again, spot $2$ is occupied (by car $3$), as is spot $3$ (by car $1$). Thus, car $4$ will park in spot $4$, which is in fact the only unoccupied spot at this point. The displacement is given by $d_4 = 4 - 2 = 2$.
\end{itemize}
Finally, the cars are all able to park, so $p = (3,1,1,2)$ is a classical parking function. We also get the outcome $\OPF{n}{p} = 2314$, and the displacement vector $\disp{p} = (0,0,1,2)$. The entire classical parking process is illustrated in Figure~\ref{fig:ex_PF_valid} below.

%%%%%%%%%%%%%%%%%%%
%%%% FIGURE 1: PF, valid %%%%
%%%%%%%%%%%%%%%%%%%
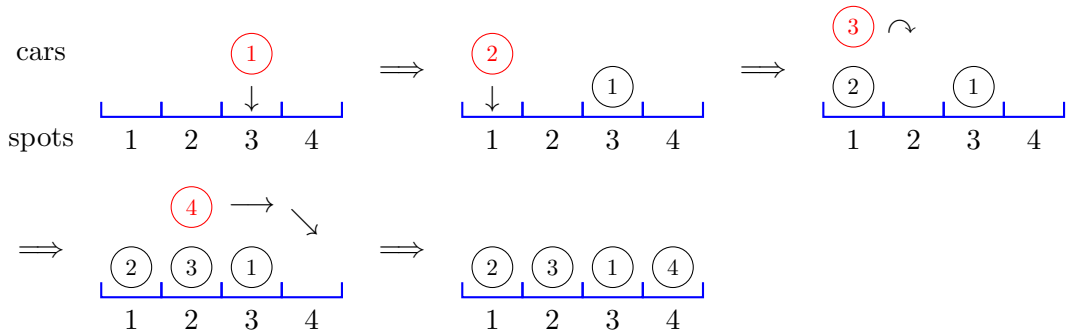
\begin{figure}[ht]
 \centering
 
  \begin{tikzpicture}[scale=0.2]
    
    \node at (-3,1.2) {cars};
    \node at (-3,-4.5) {spots};       
    
    %%Car 1
    \foreach \x in {1,...,4}
	  \node at (-1+4*\x,-4.5) {$\x$};
    %Parking spots
    \draw [thick, color=blue] (1,-2)--(1,-3)--(5,-3)--(5,-2);
    \draw [thick, color=blue] (5,-2)--(5,-3)--(9,-3)--(9,-2);
    \draw [thick, color=blue] (9,-2)--(9,-3)--(13,-3)--(13,-2);
    \draw [thick, color=blue] (13,-2)--(13,-3)--(17,-3)--(17,-2);
    \node [draw, circle, color=red, scale=0.8pt] (1) at (11,1.2) {$1$};
    \node at (11, -1.8) {$\downarrow$};
    \node at (21, 0) {$\Longrightarrow$};

    %%Car 2
    \begin{scope}[shift={(24,0)}]
    \foreach \x in {1,...,4}
	  \node at (-1+4*\x,-4.5) {$\x$};
	%Parking spots
    \draw [thick, color=blue] (1,-2)--(1,-3)--(5,-3)--(5,-2);
    \draw [thick, color=blue] (5,-2)--(5,-3)--(9,-3)--(9,-2);
    \draw [thick, color=blue] (9,-2)--(9,-3)--(13,-3)--(13,-2);
    \draw [thick, color=blue] (13,-2)--(13,-3)--(17,-3)--(17,-2);
    \node [draw, circle, scale=0.8pt] (1) at (11,-1) {$1$};
    \node [draw, circle, color=red, scale=0.8pt] (2) at (3,1.2) {$2$};
    \node at (3, -1.8) {$\downarrow$};
    \node at (21, 0) {$\Longrightarrow$};
    \end{scope}

    %%Car 3
    \begin{scope}[shift={(48,0)}]
    \foreach \x in {1,...,4}
	  \node at (-1+4*\x,-4.5) {$\x$};
	%Parking spots
    \draw [thick, color=blue] (1,-2)--(1,-3)--(5,-3)--(5,-2);
    \draw [thick, color=blue] (5,-2)--(5,-3)--(9,-3)--(9,-2);
    \draw [thick, color=blue] (9,-2)--(9,-3)--(13,-3)--(13,-2);
    \draw [thick, color=blue] (13,-2)--(13,-3)--(17,-3)--(17,-2);
    \node [draw, circle, scale=0.8pt] (1) at (11,-1) {$1$};
    \node [draw, circle, scale=0.8pt] (2) at (3,-1) {$2$};
    \node [draw, circle, color=red, scale=0.8pt] (3) at (3,3) {$3$};
    \node at (6.2, 3) {$\curvearrowright$};
    \end{scope}

    %%Car 4
    \begin{scope}[shift={(0,-12)}]
    \node at (-3, 0) {$\Longrightarrow$};
    \foreach \x in {1,...,4}
	  \node at (-1+4*\x,-4.5) {$\x$};
	%Parking spots
    \draw [thick, color=blue] (1,-2)--(1,-3)--(5,-3)--(5,-2);
    \draw [thick, color=blue] (5,-2)--(5,-3)--(9,-3)--(9,-2);
    \draw [thick, color=blue] (9,-2)--(9,-3)--(13,-3)--(13,-2);
    \draw [thick, color=blue] (13,-2)--(13,-3)--(17,-3)--(17,-2);
    \node [draw, circle, scale=0.8pt] (1) at (11,-1) {$1$};
    \node [draw, circle, scale=0.8pt] (2) at (3,-1) {$2$};
    \node [draw, circle, scale=0.8pt] (3) at (7,-1) {$3$};
    \node [draw, circle, color=red, scale=0.8pt] (4) at (7,3) {$4$};
    \node at (11, 3) {$\longrightarrow$};
    \node at (14.5, 2) {$\searrow$};
    \node at (21, 0) {$\Longrightarrow$};
    \end{scope}

    %%Final config
    \begin{scope}[shift={(24,-12)}]
    \foreach \x in {1,...,4}
	  \node at (-1+4*\x,-4.5) {$\x$};
	%Parking spots
    \draw [thick, color=blue] (1,-2)--(1,-3)--(5,-3)--(5,-2);
    \draw [thick, color=blue] (5,-2)--(5,-3)--(9,-3)--(9,-2);
    \draw [thick, color=blue] (9,-2)--(9,-3)--(13,-3)--(13,-2);
    \draw [thick, color=blue] (13,-2)--(13,-3)--(17,-3)--(17,-2);
    \node [draw, circle, scale=0.8pt] (1) at (11,-1) {$1$};
    \node [draw, circle, scale=0.8pt] (2) at (3,-1) {$2$};
    \node [draw, circle, scale=0.8pt] (3) at (7,-1) {$3$};
    \node [draw, circle, scale=0.8pt] (4) at (15,-1) {$4$};
    \end{scope}
    
  \end{tikzpicture}
    \caption{Illustrating the parking process for the classical parking function $p = (3,1,1,2)$.} \label{fig:ex_PF_valid}
\end{figure}
\end{example}

Parking functions were originally introduced by Konheim and Weiss~\cite{KonWeiss} in their study of \emph{hashing functions}. Since then, they have been a popular research topic in Mathematics and Computer Science, with rich connections to a variety of fields such as graph theory, representation theory, hyperplane arrangements, discrete geometry, and the Abelian sandpile model~\cite{ChevPyl, CR, StanHyp, PitStan}. We refer the interested reader to the excellent survey by Yan~\cite{YanSurvey}.

We may notice that in classical parking functions, the handling of \emph{collisions} -- when a car's preferred spot is already occupied when it enters the car park -- appears quite flexible. Indeed, many variations of parking functions with different parking rules have been discussed in the literature.  We list some of them briefly here. For a more detailed survey of generalisations and open problems related to parking functions, see~\cite{HarrisSurvey}.

\begin{itemize}
\item \textbf{Defective parking functions}~\cite{CamDefPF}. This model follows classical parking rules, with $m$ cars and $n$ spots, and $k$ cars being unable to park ($k$ is called the \emph{defect} of the parking function).
\item \textbf{Naples parking functions}~\cite{HarrisNaples1, HarrisNaples2, TianNaples}. This model allows for some backward movement of cars from their preferred spot if it is already occupied.
\item \textbf{Parking assortments and parking sequences} ~\cite{AdeYan, HarrisSeq2, EhrHapp, HarrisSeq1}. These models involve cars of different sizes, with the car park having just enough space for all cars to park.
\item \textbf{Subset parking functions}~\cite{Spiro}. In this model, cars have a preferred ordered subset of spots in which they can park. See also~\cite{CMMY} for an in-depth treatment of \textbf{interval parking functions}, where each preference subset is a sub-interval of $[n]$.
\item \textbf{Vector parking functions}, or $\mathbf{u}$-parking functions~\cite{Yan2023, Yin2023}. For a vector $u = (u_1, \cdots, u_n)$, we say that a parking preference $p = (p_1, p_2,\ldots, p_n)$ is a $u$-parking function if the non-decreasing rearrangement $a = (a_1, \cdots, a_n)$ of $p$ satisfies $a_i \leq u_i$ for all $i \in [n]$. These correspond to classical parking functions when $u = (1, 2, \ldots, n)$ (see e.g.~\cite[Section~1.1]{YanSurvey} for this characterisation of classical parking functions).
\item \textbf{Graphical parking functions} on some graph $G$, or $G$-parking functions~\cite{PostPF}. In this model, cars park on the vertices of the graph $G$ instead of in a one-way street.
\item \textbf{Higher-dimensional parking functions}. There are several relevant models in this area, such as the \textbf{$(p,q)$-parking functions} of Cori and Poulalhon~\cite{CoriPou}, or the \textbf{tiered parking functions} of Dukes~\cite{DukesSplit}. Both of these models involve cars of different colours or tiers, with extra conditions on where a car can park depending on its tier/colour. Snider and Yan~\cite{YanUpq, Yan2023} then defined a concept of \textbf{multidimensional $U$-parking functions}, where $U$ is a collection of multidimensional vectors. These generalise the $(p, q)$-parking functions of Cori and Poulalhon and the (one-dimensional) vector parking functions, and also connect to the graphical parking functions discussed above.
\item \textbf{MVP parking functions}~\cite{HarrisMVP, SeligZhuMVP}. In this model, priority is given to cars which enter later in the parking process. That is, when a car enters the car park, if its preferred spot is occupied by a previous car, it will ``bump'' the earlier car out of that spot, forcing it to drive on and find a new spot to park in.
\end{itemize}

In this paper, we consider a new variation on parking functions, which we call \emph{friendship parking functions}. Informally, cars follow the classical parking process, but can only park next to cars which they are ``friends'' with (friendship will be determined by adjacency in an underlying graph).

% Xiaoyi
\subsection{Friendship parking functions}\label{subsec:FPF_intro}

In this section we formally define the concept of friendship parking functions. Let $G$ be a graph with vertex set $[n]$. We say that two vertices $i$ and $j$ are \emph{adjacent} in the graph $G$ if $\{i,j\}$ is an edge of $G$. As in the classical case, we consider a parking preference $p = (p_1, \cdots, p_n) \in [n]^n$. The \emph{friendship parking process} is described as follows. 

Cars enter sequentially in order $1, \cdots, n$ into a car park with spots labelled $1, \cdots, n$. All spots are initially unoccupied. For $i \geq 1$, we say that a spot $k$ is \emph{available} for car $i$ if it is unoccupied when car $i$ enters the car park, and each of the two neighbouring spots $k-1$ and $k+1$ are either unoccupied, or occupied by a car $j$ (necessarily $j<i$) which is adjacent to $i$ in the graph $G$. Here we consider spots $0$ and $n+1$ to be unoccupied by convention when handling the cases $k = 1$ or $k=n$. When car $i$ enters the car park, it occupies (parks in) the first spot $k \geq p_i$ which is available for $i$. If no such spot exists, the car exits the car park, having failed to park.

We say that $p$ is a \emph{friendship parking function} for $G$, or $G$-friendship parking function, if all $n$ cars are able to park following this process. We denote $\FPF{G}$ the set of all $G$-friendship parking functions. As in the classical case, we can define the outcome map $\OOFPF{G}: \FPF{G} \rightarrow S_n$. For a $G$-friendship parking function $p$, we let $\pi = \pi_1 \cdots \pi_n = \OPF{G}{p}$ be the permutation such that for any $i \in [n]$, $\pi_i$ is the label of the car ending up in spot $i$ when running the friendship parking process for $p$.

Note that if $G$ is the complete graph on $n$ vertices (with edges between any pair of distinct vertices), the notions of classical and friendship parking functions are identical. Indeed, in this case, a spot is available for $i$ if, and only if, it is unoccupied when $i$ enters the car park.

\begin{example}\label{ex:FPF_valid}
Let $G$ be the cycle on $n=4$ vertices, with edges $\{1,2\}, \{2,3\}, \{3,4\}, \{4,1\}$. Consider the parking preference $p = (2,1,2,2)$, and let us illustrate the $G$-friendship parking process for $p$. 

\begin{itemize}
  \item Car $1$ has preference $p_1 = 2$. Spot $2$ is available for car $1$ since both adjacent spots are both unoccupied (this will always be the case for car $1$'s preferred spot). Therefore car $1$ parks in spot $2$.
  \item Car $2$ has preference $p_2 = 1$. The neighbouring spot $2$ is occupied by car $1$ at this point, but vertices $1$ and $2$ are adjacent in the graph $G$, so spot $1$ is available for car $2$, allowing it to park there.
  \item Car $3$ has preference $p_3 = 2$. However, spot $2$ is occupied at this point (by car $1$), so is not available for $3$. Spot $3$ is unoccupied, but the adjacent spot $2$ is occupied by car $1$, and $1$ and $3$ are not adjacent in $G$. Therefore spot $3$ is not available for car $3$. Finally, spot $4$ is also unoccupied, as is its neighbouring spot $3$, which implies that it is available, and thus car $3$ parks in spot $4$.
  \item Lastly, car $4$ has preference $p_4 = 2$. Spot $2$ is occupied, so car $4$ can't park there. But spot $3$ is unoccupied, and the cars occupying its neighbouring spots $2$ (car $1$) and $4$ (car $3$) are both adjacent to $4$ in $G$, which means spot $3$ is available for car $4$, so it parks there.
\end{itemize}
Finally, the cars are all able to park, so $p = (2,1,2,2)$ is a $G$-friendship parking function, and we get the outcome $\OFPF{G}{p} = 2143$. The entire friendship parking process is illustrated in Figure~\ref{fig:ex_FPF_valid} below.

\begin{figure}[ht]
 \centering 
  \begin{tikzpicture}[scale=0.2]
    
    \node at (-3,1.2) {cars};
    \node at (-3,-4.5) {spots};       
    
    %%Car 1
    \foreach \x in {1,...,4}
	  \node at (-1+4*\x,-4.5) {$\x$};
    %Parking spots
    \draw [thick, color=blue] (1,-2)--(1,-3)--(5,-3)--(5,-2);
    \draw [thick, color=blue] (5,-2)--(5,-3)--(9,-3)--(9,-2);
    \draw [thick, color=blue] (9,-2)--(9,-3)--(13,-3)--(13,-2);
    \draw [thick, color=blue] (13,-2)--(13,-3)--(17,-3)--(17,-2);
    \node [draw, circle, color=red, scale=0.8pt] (1) at (7,1.2) {$1$};
    \draw [->] (7,-0.9)--(7,-2.6);
    \node at (21, 0) {$\Longrightarrow$};

    %%Car 2
    \begin{scope}[shift={(24,0)}]
    \foreach \x in {1,...,4}
	  \node at (-1+4*\x,-4.5) {$\x$};
	%Parking spots
    \draw [thick, color=blue] (1,-2)--(1,-3)--(5,-3)--(5,-2);
    \draw [thick, color=blue] (5,-2)--(5,-3)--(9,-3)--(9,-2);
    \draw [thick, color=blue] (9,-2)--(9,-3)--(13,-3)--(13,-2);
    \draw [thick, color=blue] (13,-2)--(13,-3)--(17,-3)--(17,-2);
    \node [draw, circle, scale=0.8pt] (1) at (7,-1) {$1$};
    \node [draw, circle, color=red, scale=0.8pt] (1) at (3,1.2) {$2$};
    \draw [->] (3,-0.9)--(3,-2.6);
    \node at (21, 0) {$\Longrightarrow$};
    \end{scope}

    %%Car 3
    \begin{scope}[shift={(48,0)}]
    \foreach \x in {1,...,4}
	  \node at (-1+4*\x,-4.5) {$\x$};
	%Parking spots
    \draw [thick, color=blue] (1,-2)--(1,-3)--(5,-3)--(5,-2);
    \draw [thick, color=blue] (5,-2)--(5,-3)--(9,-3)--(9,-2);
    \draw [thick, color=blue] (9,-2)--(9,-3)--(13,-3)--(13,-2);
    \draw [thick, color=blue] (13,-2)--(13,-3)--(17,-3)--(17,-2);
    \node [draw, circle, scale=0.8pt] (1) at (7,-1) {$1$};
    \node [draw, circle, scale=0.8pt] (2) at (3,-1) {$2$};
    \node [draw, circle, color=red, scale=0.8pt] (3) at (7,3) {$3$};
    \node at (11,-1) {{\LARGE$\times$}};
    \draw [->, out=0, in=90] (9,3.6) to (10.5,1.2);
    \draw [->, out=30, in=150] (11.5,1.2) to (15,1.2);
    \draw [->] (15,0.4)--(15,-2.6);
    \end{scope}

    %%Car 4
    \begin{scope}[shift={(0,-12)}]
    \node at (-3, 0) {$\Longrightarrow$};
    \foreach \x in {1,...,4}
	  \node at (-1+4*\x,-4.5) {$\x$};
	%Parking spots
    \draw [thick, color=blue] (1,-2)--(1,-3)--(5,-3)--(5,-2);
    \draw [thick, color=blue] (5,-2)--(5,-3)--(9,-3)--(9,-2);
    \draw [thick, color=blue] (9,-2)--(9,-3)--(13,-3)--(13,-2);
    \draw [thick, color=blue] (13,-2)--(13,-3)--(17,-3)--(17,-2);
    \node [draw, circle, scale=0.8pt] (1) at (7,-1) {$1$};
    \node [draw, circle, scale=0.8pt] (2) at (3,-1) {$2$};
    \node [draw, circle, scale=0.8pt] (3) at (15,-1) {$3$};
    \node [draw, circle, color=red, scale=0.8pt] (4) at (7,3) {$4$};
    \draw [->, out=-30, in=90] (9.2,3) to (11,-2);
    \node at (21, 0) {$\Longrightarrow$};
    \end{scope}

    %%Final config
    \begin{scope}[shift={(24,-12)}]
    \foreach \x in {1,...,4}
	  \node at (-1+4*\x,-4.5) {$\x$};
	%Parking spots
    \draw [thick, color=blue] (1,-2)--(1,-3)--(5,-3)--(5,-2);
    \draw [thick, color=blue] (5,-2)--(5,-3)--(9,-3)--(9,-2);
    \draw [thick, color=blue] (9,-2)--(9,-3)--(13,-3)--(13,-2);
    \draw [thick, color=blue] (13,-2)--(13,-3)--(17,-3)--(17,-2);
    \node [draw, circle, scale=0.8pt] (1) at (7,-1) {$1$};
    \node [draw, circle, scale=0.8pt] (2) at (3,-1) {$2$};
    \node [draw, circle, scale=0.8pt] (3) at (15,-1) {$3$};
    \node [draw, circle, scale=0.8pt] (4) at (11,-1) {$4$};
    \end{scope}
  \end{tikzpicture}
  \caption{Illustrating the parking process for the friendship parking function $p = (2,1,2,2)$. \label{fig:ex_FPF_valid}}
\end{figure}
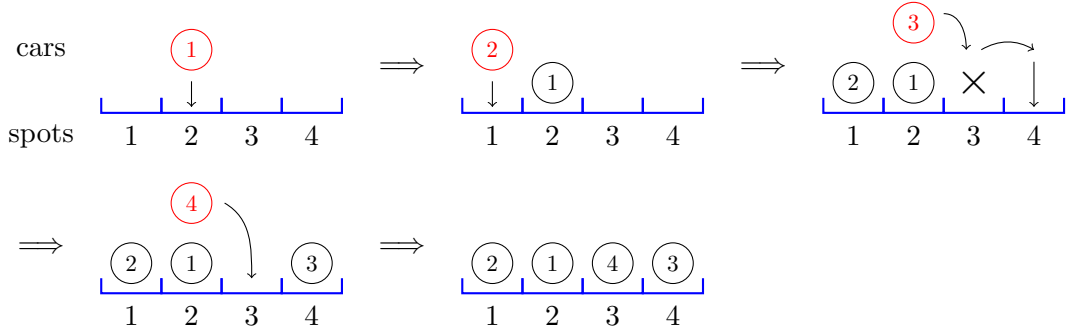
\end{example}

\begin{example}\label{ex:FPF_invalid}
Let $G$ again be the cycle graph on $4$ vertices, as in Example~\ref{ex:FPF_valid}, and consider the parking preference $p = (4,2,2,1)$. Car $1$ parks in spot $4$, and car $2$ parks in spot $2$ (both neighbouring spots are unoccupied). However, car $3$ cannot park. Indeed, the preference of car $3$ is $p_3 = 2$. At this point, the only unoccupied spot $k \geq 2$ is $k=3$, but the neighbouring spot $4$ is occupied by car $1$, which is not adjacent to $3$ in $G$. Therefore spot $3$ is not available for car $3$, and so there are no available spots on or after car $3$'s preference for it to park in. Thus $p$ is \emph{not} a $G$-friendship parking function. This parking process is illustrated on Figure~\ref{fig:ex_FPF_invalid} below.

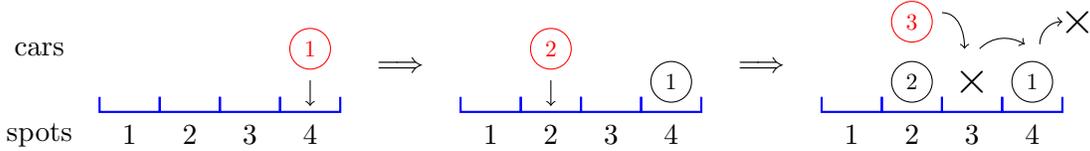
\begin{figure}[ht]
 \centering 
  \begin{tikzpicture}[scale=0.2]
    
    \node at (-3,1.2) {cars};
    \node at (-3,-4.5) {spots};       
    
    %%Car 1
    \foreach \x in {1,...,4}
	  \node at (-1+4*\x,-4.5) {$\x$};
    %Parking spots
    \draw [thick, color=blue] (1,-2)--(1,-3)--(5,-3)--(5,-2);
    \draw [thick, color=blue] (5,-2)--(5,-3)--(9,-3)--(9,-2);
    \draw [thick, color=blue] (9,-2)--(9,-3)--(13,-3)--(13,-2);
    \draw [thick, color=blue] (13,-2)--(13,-3)--(17,-3)--(17,-2);
    \node [draw, circle, color=red, scale=0.8pt] (1) at (15,1.2) {$1$};
    \draw [->] (15,-0.9)--(15,-2.6);
    \node at (21, 0) {$\Longrightarrow$};

    %%Car 2
    \begin{scope}[shift={(24,0)}]
    \foreach \x in {1,...,4}
	  \node at (-1+4*\x,-4.5) {$\x$};
	%Parking spots
    \draw [thick, color=blue] (1,-2)--(1,-3)--(5,-3)--(5,-2);
    \draw [thick, color=blue] (5,-2)--(5,-3)--(9,-3)--(9,-2);
    \draw [thick, color=blue] (9,-2)--(9,-3)--(13,-3)--(13,-2);
    \draw [thick, color=blue] (13,-2)--(13,-3)--(17,-3)--(17,-2);
    \node [draw, circle, scale=0.8pt] (1) at (15,-1) {$1$};
    \node [draw, circle, color=red, scale=0.8pt] (1) at (7,1.2) {$2$};
    \draw [->] (7,-0.9)--(7,-2.6);
    \node at (21, 0) {$\Longrightarrow$};
    \end{scope}

    %%Car 3
    \begin{scope}[shift={(48,0)}]
    \foreach \x in {1,...,4}
	  \node at (-1+4*\x,-4.5) {$\x$};
	%Parking spots
    \draw [thick, color=blue] (1,-2)--(1,-3)--(5,-3)--(5,-2);
    \draw [thick, color=blue] (5,-2)--(5,-3)--(9,-3)--(9,-2);
    \draw [thick, color=blue] (9,-2)--(9,-3)--(13,-3)--(13,-2);
    \draw [thick, color=blue] (13,-2)--(13,-3)--(17,-3)--(17,-2);
    \node [draw, circle, scale=0.8pt] (1) at (15,-1) {$1$};
    \node [draw, circle, scale=0.8pt] (2) at (7,-1) {$2$};
    \node [draw, circle, color=red, scale=0.8pt] (3) at (7,3) {$3$};
    \node at (11,-1) {{\LARGE$\times$}};
    \draw [->, out=0, in=90] (9,3.6) to (10.5,1.2);
    \draw [->, out=45, in=150] (11.5,1.2) to (14.5,1.5);
    \draw [->, out=90, in=180] (15.5,1.5) to (17,3);
    \node at (18, 3) {{\LARGE$\times$}};
    \end{scope}
  \end{tikzpicture}
  \caption{Illustrating the friendship parking process for the parking preference $p = (4,2,2,1)$. Here, car $3$ is unable to park, so $p$ is not a friendship parking function. \label{fig:ex_FPF_invalid}}
\end{figure}
\end{example}

It is worth noting that in the previous example, $p = (4,2,2,1)$ is a classical parking function (we get the outcome $\OPF{4}{p} = 4231$). This shows that our new notion of friendship parking functions is a meaningful variant of classical parking functions.

\subsection{Contents of the paper}\label{subsec:contents}

Our paper is organised as follows. In Section~\ref{sec:FPF} we study this new friendship parking process. We begin in Section~\ref{subsec:Ham_FPF_PF} by showing a link between $G$-friendship parking functions and \emph{Hamiltonian paths} of the graph $G$ (Proposition~\ref{pro:FPF_HamPath}), and by giving some results connecting friendship and classical parking functions (Propositions~\ref{pro:FPF_implies_PF} and \ref{pro:PF+outcome_implies_FPF}). Section~\ref{subsec:general_counting} then gives a characterisation and enumeration formula for friendship parking functions on general graphs (Theorem~\ref{thm:charact_FPF_outcome} and Corollary~\ref{cor:num_GFPF}). In Section~\ref{subsec:cycle} we apply these results to the case where $G$ is the cycle graph on $n$ vertices, yielding the enumeration formula from Theorem~\ref{thm:count_cn_PFP}.

In Section~\ref{sec:cyclicPF} we return to the study of classical parking functions, considering those whose outcome is an increasing cycle of the form $\IncCyc{i}{n} := \inccyc{i}{n}$ for some $i \in [n]$. We call such parking functions \emph{cyclic parking functions}, and count them (Proposition~\ref{pro:outcome_enum_CycPF} and Corollary~\ref{cor:total_number_CycPF}). We then describe a bijection between cyclic parking functions and \emph{permutation components}, which maps the displacement of cars to inversions in the corresponding permutation (Theorem~\ref{thm:bij_cycPF_perm_comp}). Finally, Section~\ref{sec:future} gives a brief summary of our results, and outlines some possible future research directions on friendship parking functions.

%%%%%%%%%%%%%%%%%%%%%%%%%%%%%%%%%%%%
%%%%%%%%%%%%%%SECTION 2%%%%%%%%%%%%%%%%
%%%%%%%%%%%%%%%%%%%%%%%%%%%%%%%%%%%%

\section{Characterising and counting friendship parking functions}\label{sec:FPF}

In this section, we give characterisations and enumeration formulas for friendship parking functions on general graphs $G$. We then apply these to the case where $G$ is the cycle graph on $n$ vertices. 

% Yanting
\subsection{Friendship parking functions and Hamiltonian paths}\label{subsec:Ham_FPF_PF}

%Link to Hamiltonian paths.
We begin by exhibiting a connection between $G$-friendship parking functions and \emph{Hamiltonian paths} of the graph $G$. To do this, we first recall some relevant concepts. Given a graph $G$, with vertex set $[n]$, a \emph{path} of $G$ is a sequence of distinct vertices $\mathcal{P} = (v_1, v_2,\cdots, v_k) \in [n]^k$ such that $\{v_i,v_{i+1}\}$ is an edge of $E$ for each $i \in \{1, \cdots, k-1\}$. Moreover, $\mathcal{P}$ is called a \emph{Hamiltonian} path if $k = n$, i.e.\ every vertex of $G$ appears in $\mathcal{P}$.

Recall that for a $G$-friendship parking function $p \in \FPF{G}$, we can define its outcome $\pi = \OFPF{G}{p} = \pi_1 \cdots \pi_n$, which is a permutation of all the cars (for $i \in [n]$, car $\pi_i$ is the car that ends up in spot $i$). In friendship parking functions, the relationship among cars is indicated by a graph $G$ and the permutation reflects that cars (vertices) that can park next to each other must be ``friends'' (adjacent in the graph $G$). We can therefore obtain the following fact. 

\begin{proposition}\label{pro:FPF_HamPath}
Let $G=(V,E)$ be a graph. Then the set of $G$-friendship parking functions is non-empty if, and only if, $G$ contains a Hamiltonian path.
\end{proposition}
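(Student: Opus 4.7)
The plan is to prove both directions by focusing on the outcome permutation $\pi = \OFPF{G}{p}$ associated with a $G$-friendship parking function, using the rule that consecutive spots in the final configuration must be occupied by friends.

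For the forward direction, suppose $p \in \FPF{G}$ has outcome $\pi = \pi_1 \cdots \pi_n$. Since $\pi$ visits every vertex of $[n]$ exactly once, it suffices to verify that $\{\pi_k, \pi_{k+1}\} \in E$ for each $k \in \{1, \ldots, n-1\}$. Fix such a $k$ and set $a := \pi_k$, $b := \pi_{k+1}$. Exactly one of $a<b$ or $b<a$ holds; by symmetry assume $a < b$, so that car $a$ parks in spot $k$ strictly before car $b$ parks in spot $k+1$. When $b$ enters the car park, spot $k$ is already occupied by $a$, and thus the availability condition for spot $k+1$ forces $\{a,b\} \in E$. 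The opposite ordering $b<a$ is handled identically with the roles of $a$ and $b$ swapped. Hence $(\pi_1, \ldots, \pi_n)$ is a Hamiltonian path of $G$.

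For the reverse direction, suppose $G$ admits a Hamiltonian path $(v_1, \ldots, v_n)$, and define the permutation $\pi$ by $\pi_k := v_k$. I would construct an explicit $p \in \FPF{G}$ with outcome $\pi$ by letting each car prefer its intended spot, namely $p_i := \pi^{-1}_i$ for every $i \in [n]$. An induction on $i$ then shows that car $i$ parks in spot $p_i$: since $\pi$ is a permutation, no earlier car has preference $p_i$, so spot $p_i$ is unoccupied when car $i$ enters, and any neighboring spot $p_i \pm 1$ that is already occupied must be occupied by $\pi_{p_i \pm 1}$, which is adjacent to $i = \pi_{p_i}$ along the Hamiltonian path and hence in $G$. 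Consequently $p_i$ is available for car $i$, which parks there.

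The main obstacle is phrasing the forward direction cleanly: the availability rule is asymmetric in time, so one must carefully note that the later of two adjacent cars is the one whose availability condition triggers the edge constraint. Once this is isolated, both implications are direct, and the construction in the reverse direction additionally shows that \emph{every} Hamiltonian path of $G$ is realised as the outcome of some $p \in \FPF{G}$, a fact that may well be used in the subsequent characterisation of outcomes.
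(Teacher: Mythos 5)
Your proposal is correct and follows essentially the same route as the paper: the forward direction argues, exactly as the paper does, that for each pair of consecutive spots in the outcome the later-arriving car's availability condition forces the edge, and the reverse direction uses the same construction $p_i = \pi^{-1}_i$ with each car parking in its preferred spot along the Hamiltonian path. The additional observation that every Hamiltonian path is realised as an outcome is also implicit in the paper's proof and is indeed used later.
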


\begin{proof}
Suppose there exists some $p \in \FPF{G}$, and consider its outcome $\OFPF{G}{p} = \pi_1 \cdots \pi_n$. We claim that $\mathcal{P} := (\pi_1, \cdots, \pi_n)$ is a Hamiltonian path of $G$. Indeed, consider a car $i = \pi_k$ which parks in some spot $k \leq n-1$ in the friendship parking process. We wish to show that $j := \pi_{k+1}$ is adjacent to $i$ in $G$. If $j<i$, then car $j$ has already parked in spot $k+1$ when car $i$ arrives. This means that for $i$ to park in spot $k$, vertices $i$ and $j$ must be adjacent in $G$ (since spot $k$ must be available for car $i$ when it arrives). The same applies if $j>i$: when car $j$ arrives, car $i$ has already parked in spot $k$, so $j$ can only park in spot $k+1$ if $i$ and $j$ are adjacent in $G$. Therefore $\mathcal{P}$ is a Hamiltonian path, as desired.

Conversely, suppose that $G$ contains a Hamiltonian path, say $\mathcal{P}= (v_1, v_2,\cdots, v_n) \in [n]^n$ where the $v_i$ are distinct. By construction, $\pi := v_1 \cdots v_n$ is a permutation of $[n]$. Define a parking preference $p = (p_1, \cdots, p_n)$ by $p_i = \pi^{-1}_i$ for all $i \in [n]$. Then it is straightforward to check that $p \in \FPF{G}$ and $\OFPF{G}{p} = \pi$. Indeed, in the friendship parking process for $p$, each car can simply park in its preferred spot (since $\mathcal{P}$ is a Hamiltonian path, these are always available).
\end{proof}

Proposition~\ref{pro:FPF_HamPath} establishes a connection between the outcomes of $G$-friendship parking functions and Hamiltonian paths of $G$. With a slight abuse of notation, we will henceforth consider a Hamiltonian path of $G$ to be a permutation $\pi = \pi_1 \cdots \pi_n$ of $[n]$ such that $\{\pi_i, \pi_{i+1}\}$ is an edge of $G$ for all $i \in [n-1]$. We denote $\Ham{G}$ the set of Hamiltonian paths of $G$. We end this section by stating two fairly straightforward propositions that connect classical and friendship parking functions.

\begin{proposition}\label{pro:FPF_implies_PF}
We have $\FPF{G} \subseteq \PF{n}$.
\end{proposition}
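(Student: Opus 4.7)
The plan is to exploit the standard characterization of classical parking functions: a preference vector $p = (p_1, \ldots, p_n) \in [n]^n$ belongs to $\PF{n}$ if, and only if, $|\{i \in [n] : p_i \leq k\}| \geq k$ for every $k \in [n]$ (this is mentioned in passing in Section~\ref{subsec:PF_and_variations} when describing $u$-parking functions). Given this, the claim reduces to checking a purely combinatorial condition on the multiset of preferences, so the graph $G$ only plays an auxiliary role.

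First I would fix some $p \in \FPF{G}$ and denote its friendship outcome by $\pi = \OFPF{G}{p}$. The key observation is that the friendship parking process, like the classical one, is \emph{forward-moving}: when car $i$ enters, it looks only among spots $k \geq p_i$. Consequently, if car $\pi_\ell$ parks in spot $\ell$ during the friendship process, then necessarily $p_{\pi_\ell} \leq \ell$. This step is purely structural and does not use anything about the graph.

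Next, I would fix $k \in [n]$ and consider the set $\{\pi_1, \pi_2, \ldots, \pi_k\}$ of cars that end up in the first $k$ spots. By the observation above, each of these $k$ distinct cars has a preference at most $k$, so $|\{i \in [n] : p_i \leq k\}| \geq k$. Since this holds for every $k$, the characterization of $\PF{n}$ immediately yields $p \in \PF{n}$.

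There is essentially no obstacle here: the proposition is a soft consequence of the fact that friendship parking is forward-moving, combined with the classical characterization of parking functions by sorted preferences. The only mild subtlety worth flagging in the write-up is to stress that one should \emph{not} attempt a step-by-step coupling of the two parking processes, since the spot chosen by a given car may differ between the friendship and classical rules; instead, one should compare the final parked configurations via the preference inequalities.
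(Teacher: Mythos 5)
Your proof is correct and follows essentially the same route as the paper: both use the classical characterization of $\PF{n}$ via preference counts (equivalently, the non-decreasing rearrangement) together with the observation that in the friendship process every car parks at or after its preferred spot. The only difference is cosmetic — the paper argues by contradiction from the rearrangement condition, while you verify the counting inequality directly.
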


\begin{proof}
We first recall a well-known characterisation of classical parking functions. For a parking preference $p = (p_1, \cdots, p_n)$, we denote $\tilde{p} = (\tilde{p}_1, \cdots, \tilde{p}_n)$ the \emph{non-decreasing rearrangement} of $p$. Then $p \in \PF{n}$ if, and only if, we have $\tilde{p}_i \leq i$ for all $i \in [n]$ (see e.g.~\cite[Section~1.1]{YanSurvey}). 

Now let $p \in \FPF{G}$ be a $G$-friendship parking function, and denote $\tilde{p}$ its non-decreasing rearrangement. Seeking contradiction, suppose there is some $i \in [n]$ such that $\tilde{p}_i > i$. By construction, this means that $\left\vert \{j \in [n];\, p_j \leq i \} \right\vert \leq i-1$. But in the friendship (or indeed classical) parking process, each car necessarily ends up parking on or after its preferred spot. In particular, for any $i \in [n]$, there must be at least $i$ cars whose preference is one of the first $i$ spots (since exactly $i$ cars must end up in these spots). This yields the desired contradiction with the above inequality.
\end{proof}

\begin{proposition}\label{pro:PF+outcome_implies_FPF}
Let $p \in \PF{n}$ be a parking function such that its outcome $\OPF{n}{p}$ is a Hamiltonian path of the graph $G$. Then we have $p \in \FPF{G}$, and $\OFPF{G}{p} = \OPF{n}{p}$.
\end{proposition}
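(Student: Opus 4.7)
The plan is to prove this by induction on the car index $i$, showing that in the $G$-friendship parking process for $p$, each car $i$ ends up in exactly the same spot as it does in the classical process, namely $\pi^{-1}_i$, where $\pi := \OPF{n}{p}$. Once this is established, both conclusions follow immediately: every car successfully parks (so $p \in \FPF{G}$), and the two outcome permutations coincide.

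First I would set up the induction. The base case $i=1$ is immediate: car $1$'s preferred spot $p_1 = \pi^{-1}_1$ has no occupied neighbours when it enters, so it is automatically available. For the inductive step, assume cars $1, \ldots, i-1$ have parked in spots $\pi^{-1}_1, \ldots, \pi^{-1}_{i-1}$ under the friendship process. Then the set of occupied spots at the moment car $i$ enters is identical in the two processes. Because car $i$ parks at $\pi^{-1}_i$ in the classical process, the spots $p_i, p_i+1, \ldots, \pi^{-1}_i - 1$ are all occupied and spot $\pi^{-1}_i$ is unoccupied; by the induction hypothesis the same holds in the friendship process. In particular, every spot in $[p_i, \pi^{-1}_i - 1]$ is occupied and hence unavailable, so car $i$ cannot stop before reaching $\pi^{-1}_i$.

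The key step is then to verify that spot $\pi^{-1}_i$ is actually \emph{available} for car $i$, i.e.\ that any occupied neighbour is a friend of $i$ in $G$. If spot $\pi^{-1}_i - 1$ (respectively $\pi^{-1}_i + 1$) is occupied at this moment, then by the induction hypothesis it is occupied by the car $\pi_{\pi^{-1}_i - 1}$ (respectively $\pi_{\pi^{-1}_i + 1}$) that will eventually end up there in the classical process. Since $\pi$ is a Hamiltonian path of $G$, both of these cars are adjacent in $G$ to $\pi_{\pi^{-1}_i} = i$. Hence every occupied neighbour of spot $\pi^{-1}_i$ is a friend of $i$, so spot $\pi^{-1}_i$ is available, and car $i$ parks there, closing the induction.

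There is no real obstacle here; the proof is essentially an inductive bookkeeping argument. The only point that requires care is the induction hypothesis itself: one must keep track of the fact that the two processes share the same occupancy pattern at each intermediate step, since that is what allows us both to identify the occupant of each occupied spot via $\pi$ and to rule out an earlier parking in the friendship process. The Hamiltonian path hypothesis on $\pi$ then furnishes the friendship relation between $i$ and the occupants of its immediate neighbours, which is exactly the extra ingredient needed to upgrade the classical parking step to a friendship parking step.
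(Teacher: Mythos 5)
Your proof is correct and follows essentially the same route as the paper: the paper's (terser) argument also shows that the friendship and classical processes coincide, using that occupied spots rule out earlier stops and that the Hamiltonian path property of $\OPF{n}{p}$ makes each car's classical spot available; you have merely made the underlying induction on the car index explicit.
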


\begin{proof}
In fact, in this case the friendship and classical parking processes are identical. Indeed, when a car $i$ parks in some spot $k$ in the classical parking process, it will never move from spot $k$ thereafter. As such, since $\pi := \OPF{n}{p} \in \Ham{G}$, this implies that the spot $k$ is available at that time for car $i$ in the friendship parking process. Since by definition $k$ is the first value greater than or equal to $p_i$ which is unoccupied when $i$ enters the car park, car $i$ also parks in $k$ for the friendship parking process. This completes the proof.
\end{proof}

\begin{remark}\label{rem:PF_FPF_diff}
Propositions~\ref{pro:FPF_implies_PF} and \ref{pro:PF+outcome_implies_FPF} are not quite converse to each other. Indeed, there exist friendship parking functions (which are thus also classical parking functions) whose classical outcome is not a Hamiltonian path. Example~\ref{ex:FPF_valid} gives an example of such a parking function given by $p = (2,1,2,2)$. This is a friendship parking function for the $4$-cycle, but its classical outcome is $\OPF{4}{p} = 2134$, which is not a Hamiltonian path of the $4$-cycle (vertices $1$ and $3$ are not adjacent). As such, friendship parking functions are not just classical parking functions whose outcome is a Hamiltonian path.
\end{remark}

% Haoyue and Thomas
\subsection{Characterising friendship parking functions according to their outcomes}\label{subsec:general_counting}

In this section, we give a general characterisation and enumeration of friendship parking functions according to their outcome permutations. For $\pi \in S_n$, the \emph{outcome fibre} of $\pi$ is the set $\FibFPF{G}{\pi} := \{p \in \FPF{G}; \, \OFPF{G}{p} = \pi \}$ of $G$-parking functions whose (friendship) outcome is $\pi$. We begin by giving the definitions of \emph{blocker} and \emph{blocking sequence}, which will be used in the subsequent results.

\begin{definition}\label{def:blocking_sequence}
Let $\pi \in \Ham{G}$ be a Hamiltonian path of the graph $G$, and $i \in [n]$. We say that $j=\pi_k$ is a \emph{blocker} for $i$ in $\pi$, or equivalently that $i$ is blocked by $j$ in $\pi$, if either of the two following conditions holds:
\begin{enumerate}[topsep=2pt, itemsep=1pt]
  \item We have $j \leq i$; or
  \item We have $j > i$, but there is a neighbour $\ell$ of $j$ in $\pi$, i.e.\ $\ell \in \{\pi_{k-1}, \pi_{k+1} \}$ such that $\ell < i$ and $\ell$ is \emph{not} adjacent to $i$ in $G$.
\end{enumerate}
The \emph{blocking sequence} for $i$ in $\pi$ is the longest sub-sequence of $\pi$, ending at $i$, whose elements are all blockers for $i$. We denote $B(i, \pi, G)$ the blocking sequence for $i$, and $b(i, \pi, G) := \vert B(i, \pi, G) \vert $ its length.
\end{definition}

\begin{example}\label{exa:Hamil_path_G}
Let $G$ be the graph in Figure~\ref{fig:exa_Hamil_G}, with Hamiltonian path $\pi = 87152463$ (drawn in red). Then $i = 4$ is blocked by: $1, 2, 3, 4$ (all $\leq 4$),
as well as $5$ and $7$. For $5$ and $7$, note that they are both neighbours of $\ell = 1$ in $\pi$, which satisfies $\ell = 1 < 4 = i$, and $1$ is not adjacent to $4$ in $G$, so Condition~(2) of Definition~\ref{def:blocking_sequence} applies.

However, $j = 8$ is not a blocker, since its only neighbour $\ell = 7$ in $\pi$ is not strictly less than $i = 4$. Neither is $j = 6$ a blocker. Indeed, its neighbours in $\pi$ are $3$ and $4$. Among these, the only value which is strictly less than $i = 4$ is $\ell = 3$, and there is an edge between $\ell = 3$ and $i = 4$ in $G$, so Condition~(2) of Definition~\ref{def:blocking_sequence} does not apply here. The blocking sequence for $4$ is then $B(4, \pi, G) = 71524$, with length $b(4, \pi, G) = 5$.

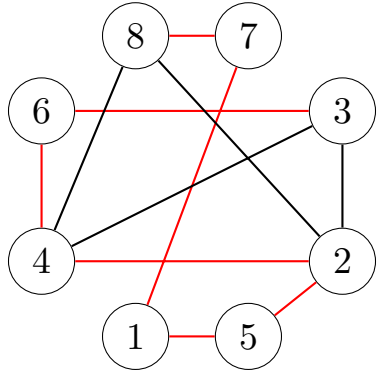
\begin{figure}[ht]
 \centering 
  \begin{tikzpicture}[scale=0.5]
  
     %%graph on 8 vertices to illustrate blocking
    \node [draw, circle, scale=1.3pt] (1) at (-1.5,0) {$1$};
    \node [draw, circle, scale=1.3pt] (2) at (4,2) {$2$};
    \node [draw, circle, scale=1.3pt] (3) at (4, 6) {$3$};
    \node [draw, circle, scale=1.3pt] (4) at (-4, 2) {$4$};
    \node [draw, circle, scale=1.3pt] (5) at (1.5,0) {$5$};
    \node [draw, circle, scale=1.3pt] (6) at (-4, 6) {$6$};
    \node [draw, circle, scale=1.3pt] (7) at (1.5, 8) {$7$};
    \node [draw, circle, scale=1.3pt] (8) at (-1.5, 8) {$8$};
    \draw [thick, red] (8)--(7)--(1)--(5)--(2)--(4)--(6)--(3);
    \draw [thick] (8)--(4)--(3)--(2)--(8);
  \end{tikzpicture}
  \caption{The graph $G$ with Hamiltonian path $\pi=87152463$ (red)\label{fig:exa_Hamil_G}}

\end{figure}
\end{example}

We now state the main result of this section, which gives a complete characterisation of the outcome fibres of friendship parking functions in terms of blocking sequences.

\begin{theorem}\label{thm:charact_FPF_outcome}
Let $\pi \in \Ham{G}$ be a Hamiltonian path of $G$, and $p = (p_1, \cdots, p_n) \in [n]^n$ a parking preference. Then we have $p \in \FibFPF{G}{\pi}$ if, and only if, for all $i \in [n]$, we have $p_i \in S_i$, where $S_i := \{s \in [n]; \, \pi_s \in B(i, \pi, G) \}$ is the set of spots that are occupied in the outcome $\pi$ by an element in the blocking sequence for $i$ in $\pi$.
\end{theorem}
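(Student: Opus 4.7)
The plan is to interpret $S_i$ geometrically: since the blocking sequence $B(i,\pi,G)$ is the longest consecutive substring of $\pi$ ending at position $k_i := \pi^{-1}_i$ whose entries are all blockers for $i$, the corresponding set of positions $S_i$ is an interval $[s_i^\star, k_i]$, and the condition $p_i \in S_i$ reads $s_i^\star \leq p_i \leq k_i$. The heart of the argument is a bridge claim: assuming that cars $1, \ldots, i-1$ have parked at positions $\pi^{-1}_1, \ldots, \pi^{-1}_{i-1}$, a position $s \neq k_i$ is unavailable for car $i$ if and only if $\pi_s$ is a blocker for $i$ in $\pi$. This is a direct unraveling of definitions: if $\pi_s < i$, then spot $s$ is occupied, matching blocker condition~(1); if $\pi_s > i$, then spot $s$ is unoccupied, so unavailability forces a neighbouring spot $s \pm 1$ to hold a previously-parked car $\pi_{s \pm 1} < i$ that is not adjacent to $i$ in $G$, matching condition~(2).

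For the forward direction, suppose $p \in \FibFPF{G}{\pi}$. Then car $i$ parks at $k_i$, so $p_i \leq k_i$, and every spot $s$ with $p_i \leq s < k_i$ must be unavailable for $i$ when it arrives. The bridge claim then says each such $\pi_s$ is a blocker; together with $\pi_{k_i} = i$ (a blocker by condition~(1)), the substring $\pi_{p_i} \pi_{p_i+1} \cdots \pi_{k_i}$ consists entirely of blockers ending at $i$, and hence lies within the blocking sequence. This forces $s_i^\star \leq p_i$, so $p_i \in S_i$.

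For the reverse direction, I induct on $i$. The base case $i=1$ is immediate: the only blocker of $1$ is $1$ itself (condition~(2) requires a neighbour $\ell < 1$, which cannot exist), so $B(1,\pi,G) = (1)$, forcing $p_1 = k_1$ and sending car $1$ to $k_1$. For the inductive step, assume cars $1, \ldots, i-1$ have parked at their $\pi$-positions when car $i$ enters. Since $\pi \in \Ham{G}$, the neighbours $\pi_{k_i - 1}$ and $\pi_{k_i + 1}$ (where they exist) are adjacent to $i$ in $G$, so spot $k_i$ is available for car $i$. For any spot $s$ with $p_i \leq s < k_i$, the assumption $p_i \in S_i = [s_i^\star, k_i]$ forces $s \in S_i$ as well, so $\pi_s$ is a blocker and by the bridge claim $s$ is unavailable. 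Hence car $i$ parks exactly at $k_i$, completing the induction. The main obstacle is pinning down the bridge claim and carrying the inductive hypothesis on prior cars' positions through the reverse direction — something which comes for free in the forward direction since the outcome is postulated to be $\pi$.
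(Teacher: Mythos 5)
Your proposal is correct and takes essentially the same route as the paper's proof: your ``bridge claim'' (given that cars $1,\dots,i-1$ occupy their $\pi$-positions, a spot $s\neq\pi^{-1}_i$ is unavailable for car $i$ if and only if $\pi_s$ is a blocker) is precisely the case analysis the paper performs inside both directions, and your induction for the converse matches the paper's argument, including the availability of spot $\pi^{-1}_i$ via the Hamiltonian-path property. The only cosmetic difference is that you run the forward direction directly (every spot in $[p_i,\pi^{-1}_i)$ is unavailable, hence a blocker, forcing $p_i\geq s_i$), whereas the paper argues by contradiction via the spot $s_i-1$.
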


\begin{example}\label{ex:calc_FPF_outcome}
Consider the graph $G$ from Figure~\ref{fig:exa_Hamil_G} with corresponding Hamiltonian path $\pi=87152463$ as in Example~\ref{exa:Hamil_path_G}. We wish to calculate the fibre set $\FibFPF{G}{\pi}$ by applying Theorem~\ref{thm:charact_FPF_outcome}. For this we calculate the blocking sequences $B(i, \pi, G)$ for each element $i \in [8]$, and their corresponding index sets $S_i := \{s \in [n]; \, \pi_s \in B(i, \pi, G) \}$.
\begin{itemize}[itemsep=2pt]
\item For $i=1$, $B(1,\pi,G)= 1 = \pi_3$, so $S_1 = \{3\}$.
\item For $i=2$, $B(2,\pi,G)= 7152 = \pi_2 \pi_3 \pi_4 \pi_5$, so $S_2 = \{2, 3, 4, 5\}$.
\item For $i=3$, $B(3,\pi,G)= 3 = \pi_8$, so $S_3 = \{8\}$.
\item For $i=4$, $B(4,\pi,G)= 71524 = \pi_2 \cdots \pi_6$, so $S_4 = [2, 6]$.
\item For $i=5$, $B(5,\pi,G)= 15 = \pi_3 \pi_4$, so $S_5 = \{3, 4\}$.
\item For $i=6$, $B(6,\pi,G) = 715246 = \pi_2 \cdots \pi_7$, so $S_6 = [2, 7]$.
\item For $i=7$, $B(7,\pi,G) = 7 =\pi_2$, so $S_7 = \{ 2 \}$.
\item For $i=8$, $B(8,\pi,G) = 8 = \pi_1$, so $S_8 = \{ 1 \}$.
\end{itemize}

Finally, we get the fibre set $\FibFPF{G}{\pi} = \prod\limits_{i = 1}^8 S_i = \{3\} \times [2,5] \times \cdots \times \{2\} \times \{1\}$, and the fibre size $ \left\vert \FibFPF{G}{\pi} \right\vert = \prod\limits_{i = 1}^8\left\vert S_i \right\vert = 1*4*1*5*2*6*1*1 = 240.$

\end{example}

Theorem~\ref{thm:charact_FPF_outcome} immediately implies enumeration formulas for the fibres and total number of friendship parking functions. 

\begin{corollary}\label{cor:num_GFPF}
For any Hamiltonian path $\pi \in \Ham{G}$ of $G$, we have
$ \left\vert \FibFPF{G}{\pi} \right\vert = \prod\limits_{i\in [n]}b(i,\pi,G)$.
In particular, this implies that the total number of $G$-friendship parking functions is equal to
$\sum\limits_{\pi\in \Ham{G}}\left(\prod\limits_{i\in [n]}b(i,\pi,G)\right)$.
\end{corollary}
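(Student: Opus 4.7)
The plan is to derive both formulas as direct consequences of Theorem~\ref{thm:charact_FPF_outcome}, which has already done the heavy lifting by pinning down exactly which parking preferences lie in a given outcome fibre. There is essentially no new combinatorics required; the entire proof is a counting exercise on the Cartesian-product description of the fibre, together with a disjoint-union argument for the total.

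First, I would fix $\pi \in \Ham{G}$ and apply Theorem~\ref{thm:charact_FPF_outcome} to rewrite
\[
\FibFPF{G}{\pi} \;=\; \prod_{i=1}^{n} S_i,
\]
where $S_i = \{s \in [n] : \pi_s \in B(i,\pi,G)\}$. Since $\pi$ is a permutation of $[n]$, the map $s \mapsto \pi_s$ is a bijection from $[n]$ to $[n]$, and restricting it to $S_i$ yields a bijection onto $B(i,\pi,G)$. Hence $|S_i| = |B(i,\pi,G)| = b(i,\pi,G)$. Multiplicativity of cardinality over Cartesian products then gives
\[
\bigl|\FibFPF{G}{\pi}\bigr| \;=\; \prod_{i=1}^{n} |S_i| \;=\; \prod_{i=1}^{n} b(i,\pi,G),
\]
which is the first assertion.

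For the total count, I would argue that the outcome map $\OOFPF{G}$ partitions $\FPF{G}$ into fibres indexed by its image, which by Proposition~\ref{pro:FPF_HamPath} (and more precisely the first half of its proof, showing every friendship outcome is a Hamiltonian path) is contained in $\Ham{G}$. Fibres over non-outcomes are empty, so
\[
\FPF{G} \;=\; \bigsqcup_{\pi \in \Ham{G}} \FibFPF{G}{\pi}.
\]
Summing the fibre formula over all $\pi \in \Ham{G}$ yields the second assertion. The only step requiring any care is verifying the bijection $S_i \leftrightarrow B(i,\pi,G)$, but this is immediate once one observes that $B(i,\pi,G)$ is a subsequence of the permutation $\pi$, so its entries are distinct; I do not foresee any genuine obstacle here, as the real content lies in Theorem~\ref{thm:charact_FPF_outcome}.
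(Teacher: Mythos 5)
Your proposal is correct and matches the paper's (implicit) argument: the paper treats the corollary as an immediate consequence of Theorem~\ref{thm:charact_FPF_outcome}, with the fibre being the Cartesian product $\prod_i S_i$, $|S_i| = b(i,\pi,G)$, and the total obtained by summing over the Hamiltonian-path outcomes guaranteed by Proposition~\ref{pro:FPF_HamPath}. Your write-up simply makes these routine steps explicit.
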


\begin{proof}[Proof of Theorem~\ref{thm:charact_FPF_outcome}]
Fix $\pi \in \Ham{G}$ a Hamiltonian path of $G$. For $i \in [n]$, denote $S_i := \{s \in [n]; \, \pi_s \in B(i, \pi, G) \}$ the set of spots occupied in $\pi$ by cars in the blocking sequence of $i$, and $s_i := \min S_i$ the first such spot. Note that by definition of the blocking sequence, we have $S_i = \left[s_i, \pi^{-1}_i \right]$.

We first assume that $p = (p_1, \cdots, p_n) \in \FibFPF{G}{\pi}$ is a friendship parking function whose outcome is the Hamiltonian path $\pi$. We wish to show that for all $i \in [n]$, we have $p_i \in S_i$. Seeking contradiction, suppose that is not the case. Let $i \in [n]$ be such that $p_i \notin S_i$, and define $j := \pi^{-1}_i$ to be the spot where car $i$ ends up. Since $S_i = \left[s_i, \pi^{-1}_i\right] = [s_i, j]$, we have two cases to consider.
\begin{enumerate}
  \item Case 1: $p_i > j$. That is, car $i$ prefers a spot (strictly) to the right of spot $j$. But by definition, we have $j = \pi_i$, so car $i$ should end up in spot $j$, which is impossible given the parking process (cars necessarily end up parking on or to the right of their preference).
  \item Case 2: $p_i < s_i$. Consider the spot $k := s_i - 1$, and the corresponding car $\pi_{k}$. Note that we have $p_i \leq k < j$. By definition of the blocking sequence, $\pi_k$ is not a blocker for $i$ in $\pi$. In particular, we must have $\pi_k > i$, so spot $k$ is unoccupied when car $i$ arrives. Moreover, none of the neighbouring spots $k-1, k+1$ can be occupied by cars arriving before $i$ which are not adjacent to $i$ in $G$. This means exactly that spot $k$ is available for $i$ in the $G$-friendship parking process for $p$. But since $p_i \leq k$, this means that car $i$ will end up either in spot $k$ or to its left. In particular, it is impossible for car $i$ to end up in spot $j$, since $j > k$. This gives the desired contradiction.
\end{enumerate}

We now show the converse. That is, suppose that $p = (p_1, \cdots, p_n)$ is a parking preference such that $p_i \in S_i$ for all $i \in [n]$. We wish to show that $p \in \FibFPF{G}{\pi}$. Consider the friendship parking process for $p$. We show by induction on $i \geq 1$ that car $i$ always ends up in spot $\pi^{-1}_i$.

For $i = 1$, we note that $1$ has no blockers other than itself, since it is the minimal element of $[n]$, and the condition for $j > i$ to be a blocker (Condition~(2) of Definition~\ref{def:blocking_sequence}) relies on the existence of an element $\ell$ satisfying $\ell < i$. That is, $B(1, \pi, G) = 1$, so $S_1 = \{ \pi^{-1}_1 \}$ by definition. Therefore, since $p_1 \in S_1$ by assumption, we have $p_1 = \pi^{-1}_1$. Finally, since car $1$ always parks in its preferred spot, we have that car $1$ ends up in spot $\pi^{-1}_1$, as desired.

Now suppose that, for some $i \geq 2$, we have shown that car $j$ parks in spot $\pi^{-1}_j$ for all $j \leq i-1$. Consider how car $i$ parks. By assumption, we have $p_i \in S_i = \left[ s_i, \pi^{-1}_i \right]$. Consider a spot $k \in S_i \setminus \{ \pi^{-1}_i \}$, and $j := \pi_k$ the corresponding car. We wish to show that $i$ cannot park in spot $k$. By definition, $j$ is a blocker for $i$, and $j \neq i$ (since $k = \pi^{-1}_j < \pi^{-1}_i$). We distinguish two cases.
\begin{enumerate}
  \item Case 1: $j < i$. By the induction hypothesis, $j$ occupies spot $\pi^{-1}_j = k$, so car $i$ cannot park in spot $k$.
  \item Case 2: $j > i$. In this case, by definition of blockers, for at least one of the values $\ell \in \{k-1, k+1\}$, we must have that $\pi_{\ell} < i$ and $\pi_{\ell}$ is not adjacent to $i$ in $G$. By the induction hypothesis, car $\pi_{\ell}$ occupies spot $\ell$, which neighbours spot $k$ in $\pi$, and since $\pi_{\ell}$ and $i$ are not adjacent in $G$, this prevents car $i$ from parking in spot $k$ in the friendship parking process.
\end{enumerate}
As such, car $i$ cannot park in any of the spots $s_i, s_i + 1, \cdots, \pi^{-1}_i - 1$. It therefore remains to show that it is allowed to park in spot $\pi^{-1}_i$ in the friendship parking process, i.e.\ that spot $\pi^{-1}_i$ is available for $i$. First, note that by the induction hypothesis, cars $1, \cdots i-1$ occupy spots $\pi^{-1}_1, \cdots, \pi^{-1}_{i-1}$. Therefore, when car $i$ enters the car park, spot $\pi^{-1}_i$ is unoccupied. Now consider $N(i) := \{ \pi^{-1}_i - 1, \pi^{-1}_i + 1 \}$ the two neighbouring spots to $\pi^{-1}_i$. By definition, if $s \in N(i)$, then $\pi_s$ is a neighbour of $i$ in the Hamiltonian path $\pi$, and therefore is adjacent to $i$ in the graph $G$. Moreover, by the induction hypothesis, when $i$ enters the car park, such a spot $s$ is either occupied by $\pi_s$ (if $\pi_s < i$), or unoccupied (if $\pi_s > i$). Finally, we have shown that when car $i$ enters the car park, spot $\pi^{-1}_i$ is unoccupied (see above), and each of its two neighbouring spots is either unoccupied, or is occupied by a car adjacent to $i$ in $G$. This means exactly that spot $\pi^{-1}_i$ is available for $i$ in the friendship parking process, as desired. 
\end{proof}

\subsection{Application: the cycle graph case}\label{subsec:cycle}

In this section, we are interested in the specific case where $G$ is a cycle graph. For $n \geq 3$, we denote $C_n$ the cycle graph on $n$ vertices. This is the graph with vertex set $[n]$ and edge set $\{ (i,i+1) \}_{i \in [n-1]} \cup \{ (n,1) \}$ (see Figure \ref{fig:cycle}).

\begin{figure}[ht]
 \centering 
  \begin{tikzpicture}[scale=0.65]
  
     %%graph on 8 vertices to illustrate blocking
    \node [draw, circle] (1) at (-1.35,0) {$1$};
    \node [draw, circle] (2) at (1.35,0) {$2$};
    \node [draw, circle] (3) at (2.5,2) {$3$};
    \node [draw, circle] (4) at (1.35,4) {$4$};
    \node [draw, circle] (5) at (-1.35,4) {$5$};
    \node [draw, circle] (6) at (-2.5,2) {$6$};
    \draw [thick] (1)--(2)--(3)--(4)--(5)--(6)--(1);
  \end{tikzpicture}
  \caption{The cycle graph $C_6$ on six vertices.\label{fig:cycle}}

\end{figure}
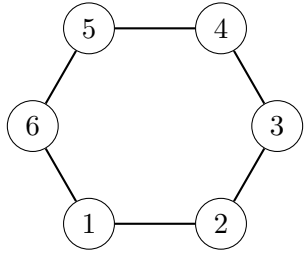

We wish to apply Theorem~\ref{thm:charact_FPF_outcome} and Corollary~\ref{cor:num_GFPF} to the cycle graph case. First, note that Hamiltonian paths of the cycle graph are of one of the following two types:
  \begin{enumerate}
    \item \emph{Increasing cycles}, which are of the form $\IncCyc{i}{n} := \inccyc{i}{n}$ for some $i \in [n]$;
    \item \emph{Decreasing cycles}, which are of the form $\DecCyc{i}{n} := \deccyc{i}{n}$ for some $i \in [n]$.
  \end{enumerate}
We characterise the blocking sequences for both types of outcome. We begin with the decreasing cycle case. For technical reasons (see Remark~\ref{rem:n_geq_4}) we assume that $n \geq 4$.

\begin{proposition}{Decreasing cycle case, $n \geq 4$.}\label{pro:dec_cycle_blocking}
\begin{enumerate}
  \item \textbf{Case $i \leq n-2$}. Let $i \leq n-2$. For any $j \in [n]$, we have:
  $$ B\left( j, \DecCyc{i}{n}, C_n \right) = 
  \left\{
  \begin{array}{ll}
      j & \text{ if } j \leq n-2, \\
      i(i-1) \cdots 1n(n-1) & \text{ if } j = n-1, \\
      i(i-1) \cdots 1n & \text{ if } j = n.
  \end{array}
  \right.$$
  In particular, this implies that $\left\vert \FibFPF{C_n}{\DecCyc{i}{n}} \right\vert = (i+1)(i+2)$.
  \item \textbf{Case $i = n-1$}. For any $j \in [n-1]$, we have $B\left( j, \DecCyc{n-1}{n}, C_n \right) = j$. Moreover, we have $B \left( n, \DecCyc{n-1}{n}, C_n \right) = (n-1) \cdots 1n = \DecCyc{n-1}{n}$. In particular, this implies that $\left\vert \FibFPF{C_n}{\DecCyc{n-1}{n}} \right\vert = n$.
  \item \textbf{Case $i = n$}. For any $j \in [n]$, we have $B \left( j, \DecCyc{n}{n}, C_n \right) = j$. In particular, this implies that $\left\vert \FibFPF{C_n}{\DecCyc{n}{n}} \right\vert = 1$.
\end{enumerate}
\end{proposition}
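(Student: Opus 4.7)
The plan is to compute the blocking sequence $B(j, \DecCyc{i}{n}, C_n)$ for each $j \in [n]$ directly from Definition~\ref{def:blocking_sequence}, by scanning the Hamiltonian path $\pi = \DecCyc{i}{n} = \deccyc{i}{n}$ leftward from $j$'s position and identifying which predecessors are blockers for $j$. The three enumeration formulas $(i+1)(i+2)$, $n$, and $1$ then follow immediately from Corollary~\ref{cor:num_GFPF}.

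The structure of $\pi$ as (at most) two decreasing runs means that for almost every $j$ the element of $\pi$ immediately preceding $j$ is $j+1$. A direct check shows that such a predecessor $j+1$ is essentially never a blocker for $j$: Condition~(1) of Definition~\ref{def:blocking_sequence} fails since $j+1 > j$, and the neighbours of $j+1$ in $\pi$ are $j$ and one other element, which is $j+2$ in the generic case. Turning points of $\pi$ yield exceptions to this ``other element is $j+2$'' rule, precisely when $j+1 = i$ (so $j = i-1$ and $j+1$ sits at position~$1$, with no further predecessor) or $j+1 = n$ (so $j = n-1$ and $j+1$ sits at position $i+1$, with predecessor $\pi_i = 1$). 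In the first exception, Condition~(2) still fails and $j+1$ is not a blocker; in the second, the neighbour $1$ of $n$ satisfies $1 < n-1 = j$, and since $n \geq 4$ forces $n-1 \geq 3$, the vertices $1$ and $n-1$ are not adjacent in $C_n$, so Condition~(2) applies and $n$ is a blocker for $n-1$.

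A similar (simpler) analysis handles the case $j = n$, which is the remaining turning point: the predecessor of $n$ in $\pi$ is $1$ rather than $n+1$ (provided $i \leq n-1$, i.e., $n$ is not itself at position~$1$), and $1 \leq n$ is a blocker by Condition~(1). Once an initial blocker is identified at one of the two turning points ($j = n$ or $j = n-1$), the remaining leftward predecessors run through $\{1, 2, \ldots, i\}$, all of which are $\leq n-1 \leq n$ and hence blockers by Condition~(1). The blocking sequence therefore extends all the way back to $\pi_1 = i$: for $j = n$ (with $i \leq n-1$), $B(n) = i(i-1) \cdots 1 n$ of length $i+1$; for $j = n-1$ (with $i \leq n-2$), $B(n-1) = i(i-1) \cdots 1 n(n-1)$ of length $i+2$.

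Assembling these computations yields the three sub-cases of the Proposition. In Case~1 ($i \leq n-2$) both exceptional values $j = n$ and $j = n-1$ contribute, all other $b(j)$ equal $1$, and the product is $(i+2)(i+1)$. In Case~2 ($i = n-1$) only $j = n$ contributes an exceptional blocking sequence of length $n$, since $j = n-1 = i$ sits at position~$1$ and has no predecessor, giving the product $n$. In Case~3 ($i = n$) neither exceptional case applies: $j = n = i$ has no predecessor, and for $j = n-1$ the predecessor $n$ is at position~$1$ with only the single neighbour $n-1 = j$, so Condition~(2) fails; every $B(j) = j$ and the product equals $1$. I expect the main obstacle to be careful bookkeeping around these boundary cases, and in particular the $C_n$-adjacency check, which is precisely where the hypothesis $n \geq 4$ enters: for $n = 3$, the vertex $1$ would be adjacent to $n-1 = 2$ in $C_3$, invalidating the crucial $j = n-1$ analysis.
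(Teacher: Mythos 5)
Your proposal is correct and follows essentially the same route as the paper: a direct case-by-case computation of the blocking sequences from Definition~\ref{def:blocking_sequence} (showing the immediate predecessor $j+1$ is generically not a blocker, handling the turning points $j=n$ and $j=n-1$ where the predecessor $1$, resp.\ the neighbour $1$ of $n$, creates long blocking sequences), followed by an application of Corollary~\ref{cor:num_GFPF}. Your identification of where the hypothesis $n \geq 4$ enters (non-adjacency of $1$ and $n-1$ in $C_n$) matches the paper's Remark~\ref{rem:n_geq_4}.
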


\begin{proof}
Fix some $i, j \in [n]$, and consider the blocking sequence for $j$ in the decreasing cycle $\DecCyc{i}{n} = \deccyc{i}{n}$. We first note that if $j \leq n-2$, then $\DecCyc{i}{n} = \cdots (j+2)(j+1)j \cdots$ (assuming $j$ doesn't appear in position $1$ or $2$). This implies that $j+1$ is not a blocker for $j$, since it is strictly greater than $j$ and both of its neighbours in $\DecCyc{i}{n}$ are greater than or equal to $j$. This shows that $B\left(j, \DecCyc{i}{n}, C_n \right) = j$, as desired.

Now consider $j = n-1$. If $i \leq n-2$, then we have $\DecCyc{i}{n} = i \cdots 1n(n-1) \cdots$. As the vertices $1$ and $n-1$ are not adjacent in $C_n$ (since $n \geq 4$), and $1 < n-1$, this implies that $n$ is a blocker for $n-1$. Moreover, all values $i, i-1, \cdots, 1$ are strictly less than $n-1$, so are all blockers. Thus, we have $B\left(n-1, \DecCyc{i}{n}, C_n \right) = i(i-1) \cdots 1n(n-1)$, as desired. However, this relies on $1$ (immediately) preceding $n$ in $\DecCyc{i}{n}$, which only occurs if $i \leq n-2$. Otherwise, $n-1$ appears in first or second position, and is at most preceded by $n$, which is not a blocker in that case, so $B\left(n-1, \DecCyc{i}{n}, C_n \right) = n-1$ if $i \in \{n-1, n\}$, as desired.

Finally, consider $j = n$. If $i \leq n-1$, then we have $\DecCyc{i}{n} = i \cdots 1n \cdots$. All elements in $i, i-1, \cdots, 1$ are strictly less than $n$, so are blockers in this case. This gives $B\left(n, \DecCyc{i}{n}, C_n \right) = i(i-1) \cdots 1n $ as desired. If $i = n$, then $n$ appears in first position in $\DecCyc{n}{n} = n(n-1) \cdots 1$, so $B\left(n, \DecCyc{i}{n}, C_n \right) = n$. This completes the proof.
\end{proof}

\begin{remark}\label{rem:n_geq_4}
The above proof in the case $j = n-1$ shows why we assumed that $n \geq 4$. Otherwise, for $n=3$, in the outcome $132$, the blocking sequence for $j = n-1 = 2$ is just $2$. Indeed, in this case $3$ is not a blocker for $2$, since $1$ and $2$ are adjacent in $C_3$. We therefore get $\left\vert \FibFPF{C_3}{132} \right\vert = 1*1*b(3, 132, C_3) = 2$, instead of the value $2*3 = 6$ as would be given by Case~(1) of Proposition~\ref{pro:dec_cycle_blocking}.
\end{remark}

\begin{proposition}{Increasing cycle case.}\label{pro:inc_cycle_blocking}
\begin{enumerate}
  \item \textbf{Case $i \geq 4$}. Let $i \geq 4$. For any $j \in [n]$, we have:
  $$
  B \left( j, \IncCyc{i}{n}, C_n \right) = \left\{
    \begin{array}{ll}
      i(i+1) \cdots j & \text{ if } j \geq i, \\
      n12 \cdots j & \text{ if } 3 \leq j \leq i-1, \\
      12 & \text{ if } j=2, \\
      1 & \text{ if } j=1.
    \end{array}
  \right.
  $$
  In particular, this implies that $\left\vert \FibFPF{C_n}{\IncCyc{i}{n}} \right\vert = (n-i+1)!i!/3$.
  \item \textbf{Case $i \leq 3$}. Let $i \leq 3$. For any $j \in [n]$, we have:
  $$
  B \left( j, \IncCyc{i}{n}, C_n \right) = \left\{
    \begin{array}{ll}
      i \cdots j & \text{ if } j \geq i, \\
      1 \cdots j & \text{ if } j \leq i-1.
    \end{array}
  \right.
  $$
  In particular, this implies that $\left\vert \FibFPF{C_n}{\IncCyc{i}{n}} \right\vert = (n+1-i)!(i-1)!$.
\end{enumerate}
\end{proposition}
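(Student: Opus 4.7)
The plan is to compute $B(j, \IncCyc{i}{n}, C_n)$ explicitly for each $j \in [n]$ by walking backwards through the path $\IncCyc{i}{n} = i, i+1, \ldots, n, 1, 2, \ldots, i-1$, and then to invoke Corollary~\ref{cor:num_GFPF}. The element $i$ sits at position $1$ and $n$ at position $n-i+1$, so the natural split is between $j$ in the ``ascending block'' $\{i, \ldots, n\}$ and $j$ in the ``wrap-around block'' $\{1, \ldots, i-1\}$.

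For $j \geq i$, moving backward from $j$ one meets $j-1, j-2, \ldots, i$, each at most $j$ and hence a blocker by Condition~(1) of Definition~\ref{def:blocking_sequence}; the sweep then stops at position $1$. This gives $B = i(i+1)\cdots j$ of length $j-i+1$, matching both clauses of the proposition in this range. For $j \leq i-1$, one first collects $j-1, \ldots, 1$ (all blockers by Condition~(1)) and then reaches $n$. The neighbours of $n$ in $\pi$ are $1$ and, if $i<n$, also $n-1$; since $n-1 \geq i-1 \geq j$, the only possible Condition~(2) witness at $n$ is $\ell = 1$, and this succeeds iff $1 < j$ and $1 \notin \{j-1, j+1\}$ in $C_n$, i.e.\ iff $j \geq 3$. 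So for $j \in \{1, 2\}$ the sequence terminates before $n$, producing $B = 1$ and $B = 12$ respectively. For $3 \leq j \leq i-1$ one continues past $n$ to $n-1$ (or to the left end of the path if $i=n$, in which case the sweep just stops at $n$). The neighbours of $n-1$ in $\pi$ are $n-2$ and $n$, and since $j \leq i-1 \leq n-2$ both are at least $j$, so no Condition~(2) witness exists; thus $n-1$ is not a blocker, the sweep halts, and $B = n\,1\,2\cdots j$ of length $j+1$.

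Feeding these lengths into Corollary~\ref{cor:num_GFPF} yields the enumerations. For $i \geq 4$ the product becomes $1 \cdot 2 \cdot \prod_{j=3}^{i-1}(j+1) \cdot \prod_{j=i}^{n}(j-i+1) = \tfrac{i!}{3}(n-i+1)!$, while for $i \leq 3$ the range $3 \leq j \leq i-1$ is empty and the blocker analysis collapses to two ascending chains, giving $\prod_{j=1}^{i-1} j \cdot \prod_{j=i}^{n}(j-i+1) = (i-1)!\,(n-i+1)!$. The main subtlety is the Condition~(2) check that $n-1$ fails to be a blocker once $j \geq 3$: this failure is exactly what truncates the backward sweep at $n$, and its \emph{absence} when $j \leq 2$ (because $1$ is then either equal or $C_n$-adjacent to $j$) is what makes the regime $i \leq 3$ qualitatively different from $i \geq 4$.
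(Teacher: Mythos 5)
Your proposal is correct and follows essentially the same route as the paper: identify the blocking sequences by a backward sweep through $\IncCyc{i}{n}$, using Condition~(1) for the elements below $j$, Condition~(2) with witness $\ell = 1$ to show $n$ blocks exactly when $3 \leq j \leq i-1$, checking that $n-1$ (or the left end of the path) terminates the sweep, and then multiplying the lengths via Corollary~\ref{cor:num_GFPF}. If anything, your explicit treatment of the boundary situations $i = n$ and $i = n-1$ when arguing that the sweep stops after $n$ is slightly more careful than the paper's wording, but the argument is the same.
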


\begin{proof}
Fix $i,j \in [n]$. If $j \geq i$, we have $B \left( j, \IncCyc{i}{n}, C_n \right) = i(i+1)\cdots j$. Indeed, all elements to the left of $j$ in $\IncCyc{i}{n}$ are less than or equal to $j$, so they are all blockers for $j$. Moreover, it is straightforward to check that $B \left( 1, \IncCyc{i}{n}, C_n \right) = 1$ (true in fact for any graph $G$ and Hamiltonian path $\pi \in \Ham{G}$, see proof of Theorem~\ref{thm:charact_FPF_outcome}).

Now consider the case $j=2$ where $j \leq i-1$, i.e.\ $2$ is not in first position in $\IncCyc{i}{n}$. In this case, $2$ immediately follows $1$ in $\IncCyc{i}{n}$, and therefore $1$ is in the blocking sequence for $2$. Moreover, $n$ is not a blocker for $2$, since its neighbours in $\IncCyc{i}{n}$ are $n-1$ (which is greater than or equal to $2$), and $1$ (which is adjacent to $2$ in $C_n$). Therefore if $i \geq 3$ we have $B \left( 2, \IncCyc{i}{n}, C_n \right) = 12 = 1 \cdots j$, as desired.

It therefore remains to consider the case $3 \leq j \leq i-1$. Necessarily we have $i \geq 4$, so that $\IncCyc{i}{n} = \cdots n12 \cdots j \cdots$. By definition, $1, 2, \cdots, j$ are all blockers for $j$, since they are less than or equal to $j$. Moreover, since $3 \leq j \leq i-1 \leq n-1$, the elements $j$ and $1$ are not adjacent in $C_n$ ($n$ and $2$ are the vertices adjacent to $1$). Since $n$ is followed by $1$ in $\IncCyc{i}{n}$, this implies that $n$ is a blocker for $j$. Finally, $n-1$ is not a blocker for $j$, since both its neighbours in $\IncCyc{i}{n}$ are greater than or equal to $j$. Therefore, we get $B \left( 2, \IncCyc{i}{n}, C_n \right) = n12 \cdots j$, as desired. This completes the proof.
\end{proof}

Combining Propositions~\ref{pro:dec_cycle_blocking} and \ref{pro:inc_cycle_blocking}, we get the following enumeration for the total number of $C_n$-friendship parking functions.

\begin{theorem}\label{thm:count_cn_PFP}
Let $n \geq 4$. The total number of friendship parking functions for the cycle graph $C_n$ is given by
$$ \big\vert \FPF{C_n} \big\vert = n + 1 + \sum\limits_{i=1}^{n-2} (i+1)(i+2) + \sum\limits_{i=1}^{3} (n+1-i)!(i-1)! + \sum\limits_{i=4}^n (n-i+1)!i!/3.$$
\end{theorem}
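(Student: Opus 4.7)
The plan is to apply Corollary~\ref{cor:num_GFPF} directly: the total count $|\FPF{C_n}|$ equals the sum over Hamiltonian paths $\pi \in \Ham{C_n}$ of the fibre size $\prod_{i \in [n]} b(i, \pi, C_n) = |\FibFPF{C_n}{\pi}|$. The first step is to explicitly enumerate $\Ham{C_n}$. As noted just after Figure~\ref{fig:cycle}, every Hamiltonian path of $C_n$ is either an increasing cycle $\IncCyc{i}{n}$ or a decreasing cycle $\DecCyc{i}{n}$ for some $i \in [n]$, giving $2n$ paths in total. (This is immediate: a Hamiltonian path on $C_n$ is obtained by removing a single edge, and the resulting path, read from either endpoint and normalised to start with a vertex of $[n]$, produces the listed forms.) These two families are disjoint except when $n = 1, 2$, which is excluded by the hypothesis $n \geq 4$.

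Next, I would simply invoke the fibre counts already computed in Propositions~\ref{pro:dec_cycle_blocking} and \ref{pro:inc_cycle_blocking}. Splitting the sum according to whether $\pi$ is increasing or decreasing, and then further according to the value of $i$, we get
\begin{align*}
|\FPF{C_n}| &= \sum_{i=1}^{n} \left| \FibFPF{C_n}{\DecCyc{i}{n}} \right| + \sum_{i=1}^{n} \left| \FibFPF{C_n}{\IncCyc{i}{n}} \right|.
\end{align*}
For the decreasing sum, Proposition~\ref{pro:dec_cycle_blocking} splits into three sub-cases giving $\sum_{i=1}^{n-2}(i+1)(i+2)$ for $i \leq n-2$, the term $n$ for $i = n-1$, and the term $1$ for $i=n$. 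For the increasing sum, Proposition~\ref{pro:inc_cycle_blocking} splits into two sub-cases giving $\sum_{i=1}^{3}(n+1-i)!(i-1)!$ for $i \leq 3$ and $\sum_{i=4}^{n}(n-i+1)!i!/3$ for $i \geq 4$.

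Collecting these four pieces (plus the constants $n$ and $1$) yields exactly the claimed expression, completing the proof. There is essentially no obstacle here: the theorem is a bookkeeping corollary of the two propositions, combined with the explicit description of $\Ham{C_n}$. The one minor subtlety worth double-checking is that the decomposition $\Ham{C_n} = \{\IncCyc{i}{n}\}_{i \in [n]} \sqcup \{\DecCyc{i}{n}\}_{i \in [n]}$ is indeed a disjoint union for $n \geq 4$, so that no Hamiltonian path is counted twice; this is immediate since each increasing cycle has adjacent positions in increasing order (mod the single descent), a property the decreasing cycles do not share when $n \geq 4$.
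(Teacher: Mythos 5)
Your proposal is correct and matches the paper's own argument: the paper derives Theorem~\ref{thm:count_cn_PFP} exactly by classifying $\Ham{C_n}$ into the $n$ increasing and $n$ decreasing cycles and summing the fibre sizes from Propositions~\ref{pro:dec_cycle_blocking} and \ref{pro:inc_cycle_blocking} via Corollary~\ref{cor:num_GFPF}. Your extra check that the two families of Hamiltonian paths are disjoint (and exhaust $\Ham{C_n}$) is a harmless, correct addition to the same bookkeeping.
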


For the case $n=3$, the enumeration is given by the same formula, except that we replace the product $(i+1)(i+2)$ in the left-most sum with just $(i+1)$, as per Remark~\ref{rem:n_geq_4}.

% Haoyue and Thomas
\section{Cyclic parking functions and permutation components}\label{sec:cyclicPF}

In this part we consider classical parking functions whose outcome is an increasing cycle, i.e.\ of the form $\IncCyc{i}{n} := \inccyc{i}{n}$ for some $i \in [n]$. This is a slight variant on the $C_n$-friendship parking functions from Section~\ref{subsec:cycle}. We say that $p = (p_1, \cdots, p_n)$ is a \emph{cyclic parking function} if we have $\OPF{n}{p} = \IncCyc{i}{n}$, for some $i \in [n]$. We denote $\CycPF{n} := \bigcup\limits_{i \in [n]} \FibPF{n}{\IncCyc{i}{n}}$ the set of cyclic parking functions of length $n$. 

\begin{proposition}\label{pro:outcome_enum_CycPF}
For any $i \in [n]$, we have $\left\vert \FibPF{n}{\IncCyc{i}{n}} \right\vert = (n+1-i)! (i-1)!$.
\end{proposition}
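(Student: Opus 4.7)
The plan is to analyse the classical parking process directly for a prescribed outcome $\pi = \IncCyc{i}{n} = i(i+1) \cdots n 1 2 \cdots (i-1)$, without invoking the friendship machinery (which would give a different count for $i \geq 4$, as noted in Proposition~\ref{pro:inc_cycle_blocking}). First I would compute the inverse: car $j$ with $j \geq i$ must end up at spot $j - i + 1$, while car $j$ with $j < i$ must end up at spot $j + n - i + 1$. Since cars enter in the natural order $1, 2, \ldots, n$, this splits the process into two clean phases: cars $1, \ldots, i-1$ arrive first and occupy spots $n-i+2, n-i+3, \ldots, n$ in that order, and then cars $i, i+1, \ldots, n$ arrive and occupy spots $1, 2, \ldots, n-i+1$ in that order. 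The count will then be a product over these two phases.

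For the first phase, a straightforward induction on $k \in [i-1]$ shows that when car $k$ enters, exactly the spots $n-i+2, \ldots, n-i+k$ are occupied (by cars $1, \ldots, k-1$), while the remaining spots are free. For car $k$ to end up at spot $n-i+1+k$, I would check that its preference $p_k$ must lie in the interval $[n-i+2, n-i+1+k]$: any preference in $[1, n-i+1]$ would park the car in the free initial segment, and any preference in $[n-i+k+2, n]$ would either skip past spot $n-i+1+k$ or cause the car to fail. This gives exactly $k$ valid choices for $p_k$, contributing the factor $1 \cdot 2 \cdots (i-1) = (i-1)!$.

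For the second phase, an analogous induction on $k \in \{0, 1, \ldots, n-i\}$ shows that when car $i+k$ enters, the occupied spots are exactly $[1, k] \cup [n-i+2, n]$. For car $i+k$ to end up at spot $k+1$, its preference must lie in $[1, k+1]$: any preference in $[k+2, n-i+1]$ would park the car in the free middle segment at a spot strictly greater than $k+1$, and any preference in $[n-i+2, n]$ would collide with the occupied tail and fail to park. This yields $k+1$ valid choices for $p_{i+k}$, contributing the factor $\prod_{k=0}^{n-i}(k+1) = (n-i+1)!$. Multiplying the two contributions yields $(i-1)! (n-i+1)! = (n+1-i)!(i-1)!$, as claimed.

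There is no significant obstacle here; the argument is essentially careful bookkeeping of which spots are occupied at each stage, combined with a finite case analysis to rule out preferences outside the stated ranges. The only point requiring a little attention is the transition between the two phases (confirming that all of phase one's cars have finished parking into the correct tail segment before car $i$ arrives), which follows directly from the phase-one induction.
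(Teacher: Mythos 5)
Your proposal is correct and follows essentially the same route as the paper's proof: a direct analysis of the classical parking process for the prescribed outcome, splitting the cars into the two phases $1,\ldots,i-1$ (parking in the tail segment, with $k$ admissible preferences for car $k$) and $i,\ldots,n$ (parking in the initial segment, with $k+1$ admissible preferences for car $i+k$), then multiplying to get $(i-1)!\,(n+1-i)!$. The paper states this more tersely, while you spell out the occupied-spot invariants and the exclusion of out-of-range preferences explicitly, but the counting argument is the same.
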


\begin{proof}
The proof is similar to that of Theorem~\ref{thm:charact_FPF_outcome} and Proposition~\ref{pro:inc_cycle_blocking}, so we allow ourselves to be briefer here. Let $i \in [n]$, and consider how a parking preference $p = (p_1, \cdots, p_n)$ can yield the outcome permutation $\IncCyc{i}{n} = \inccyc{i}{n}$ (for the classical parking process). Car $1$ ends up in spot $j := n+2-i$ (with the convention that $j = 1$ if $i=1$). Since car $1$ always ends up in its preferred spot in the classical parking process, we must have $p_1 = j$. Then car $2$ ends up in spot $j+1$. This occurs if, and only if, we have $p_2 \in \{j, j+1\}$ (since car $1$ occupies $j$). 

We repeat this reasoning on the first $i-1$ cars, and see that each car $k \in \{1, \cdots, i-1\}$ has exactly $k$ possibilities for its parking preference. This gives the $(i-1)!$ factor in the formula. The $(n+1-i)!$ factor comes from an analogous reasoning on cars $i, i+1, \cdots, n$. Car $i$ ends up in spot $1$, so must prefer that spot. Car $(i+1)$ ends up in spot $2$, so must prefer spot $1$ (occupied by car $i$) or spot $2$, and so on. This completes the proof.
\end{proof}

\begin{remark}\label{rem:CycPF_not_FPF(C_n)}
The formula in Proposition~\ref{pro:outcome_enum_CycPF} is quite similar to that of friendship parking functions on the cycle graph $C_n$ whose outcome is an increasing cycle (see Proposition~\ref{pro:inc_cycle_blocking}). The key difference lies in the role of the car/vertex $n$. In $C_n$-friendship parking functions, this vertex plays the role of a blocker for cars $j \geq 3$ which park to its right, whereas in classical parking functions it is never a ``blocker'' (if car $j$ wanted to park in the corresponding spot, it could). 

As an example, consider the parking function $p = (2,3,1,1)$. In the friendship parking process for $C_4$, car $3$ cannot park in spot $1$, since spot $2$ is occupied by car $1$, and there is no edge $(1,3)$ in $C_4$. Therefore we get the outcome $\OFPF{4}{p} = 4123 = \IncCyc{4}{4}$, and $p$ is a $C_4$-friendship parking function. However, in the classical parking process, car $3$ can park in spot $1$, and so car $4$ will end up in spot $4$, yielding the outcome $\OPF{4}{p} = 3124$, so $p$ is not a cyclic parking function in the sense of this section.
\end{remark}

Summing the right-hand side of the enumeration in Proposition~\ref{pro:outcome_enum_CycPF} over values of $i$ from $1$ to $n$, combined with the change of variable $i \rightarrow n+1-i$, immediately yields the following.

\begin{corollary}\label{cor:total_number_CycPF}
We have $\left\vert \CycPF{n} \right\vert = \sum\limits_{i=0}^{n-1} i! (n-i)!$.
\end{corollary}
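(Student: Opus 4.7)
The plan is to observe that the corollary is a near-immediate consequence of Proposition~\ref{pro:outcome_enum_CycPF}, so the proof should be very short; essentially a disjoint union argument followed by a re-indexing.

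First I would note that by definition $\CycPF{n} = \bigcup_{i \in [n]} \FibPF{n}{\IncCyc{i}{n}}$, and this union is \emph{disjoint}: a classical parking function has a unique outcome permutation under $\OOPF{n}$, and the permutations $\IncCyc{1}{n}, \IncCyc{2}{n}, \ldots, \IncCyc{n}{n}$ are pairwise distinct (they differ in where the value $1$ sits in one-line notation). Therefore
\[
\left\vert \CycPF{n} \right\vert = \sum_{i=1}^n \left\vert \FibPF{n}{\IncCyc{i}{n}} \right\vert.
\]

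Next I would apply Proposition~\ref{pro:outcome_enum_CycPF}, which gives $\left\vert \FibPF{n}{\IncCyc{i}{n}} \right\vert = (n+1-i)!\,(i-1)!$, to obtain
\[
\left\vert \CycPF{n} \right\vert = \sum_{i=1}^n (n+1-i)!\,(i-1)!.
\]
Finally, performing the change of variable $j := i-1$ (so that $i$ ranging over $\{1,\ldots,n\}$ corresponds to $j$ ranging over $\{0,\ldots,n-1\}$) transforms the right-hand side into $\sum_{j=0}^{n-1} (n-j)!\,j! = \sum_{i=0}^{n-1} i!\,(n-i)!$, which is the stated formula.

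There is no real obstacle here since all the work has been done in Proposition~\ref{pro:outcome_enum_CycPF}; the only things to be careful about are ensuring the disjointness of the outcome fibres (immediate from uniqueness of the outcome map) and carrying out the re-indexing cleanly.
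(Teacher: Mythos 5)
Your proof is correct and follows essentially the same route as the paper: the paper also obtains the formula by summing the fibre sizes $(n+1-i)!\,(i-1)!$ from Proposition~\ref{pro:outcome_enum_CycPF} over $i \in [n]$ and re-indexing (the paper uses the substitution $i \mapsto n+1-i$, you use $j = i-1$, which is an equivalent rewriting). Your explicit remark that the fibres are disjoint because the outcome map is well defined is a minor point the paper leaves implicit, but it does not change the argument.
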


Calculating these numbers for the first few values of $n$, we get the sequence $1, 3, 10, 40, 192, \cdots$, which corresponds to Sequence~A136128 in the OEIS~\cite{OEIS}. The entry for this sequence states ``Number of components in all permutations of $[n]$'', and the goal of this section is to give a bijective proof of this fact by exhibiting a bijection between cyclic parking functions and permutation components. As such, cyclic parking functions provide a new combinatorial interpretation for this Sequence~A136128. Before defining our bijection, we first need to introduce some terminology relating to permutations.

Given a permutation $\pi = \pi_1 \cdots \pi_n$, a \emph{component} of $\pi$ is a sub-sequence $\pi_i \pi_{i+1} \cdots \pi_j$ for some $1 \leq i \leq j \leq n$ such that $\{ \pi_i, \pi_{i+1}, \cdots \pi_j \} = \{ i, i+1, \cdots, j\}$, which is minimal for inclusion (i.e.\ it does not contain any smaller such sub-sequence). It is straightforward to see that every permutation can be uniquely decomposed into its components. For example, the permutation $31248657$ has three components: $312$, $4$, $8657$. We will use a slash to separate the components of a permutation, for example $312/4/8657$. The set of components of a permutation $\pi$ is denoted $\comp{\pi}$.

Note that the same sub-sequence can be a component of more than one permutation. For example, $1$ is a component of both $1/32$ or $1/2/3$. In this section, we consider the set of all components of permutations of length $n$, $\Comp{n} := \bigsqcup\limits_{\pi \in S_n} \comp{\pi}$, where $\bigsqcup$ denotes the disjoint union. That is, we distinguish between the component $1$ in the permutation $1/32$ and the component $1$ in the permutation $1/2/3$. For $c \in \Comp{n}$, we will denote $\Perm{c} \in S_n$ the underlying permutation, i.e.\ $c \in \comp{\Perm{c}}$).

In our bijection between cyclic parking functions and permutation components, permutation inversions will play a key role. Given a permutation $\pi = \pi_1 \cdots \pi_n \in S_n$, we say that an ordered pair $(i, j)$ is an \emph{inversion} of $\pi$ if $i > j$ and $i$ appears before $j$ in the permutation $\pi$. The \emph{inversion number} of $i$ in $\pi$, denoted $\inv{i}{\pi}$, is the number of $j < i$ such that $(i, j)$ is an inversion of $\pi$. In words, the inversion number of $i$ counts how many letters less than $i$ appear after $i$ in $\pi$. For example, the permutation $\pi = 3152476$ has the following inversions: $(3,1), (3,2), (5,2), (5,4), (7,6)$. We have $\inv{1}{\pi} = \inv{2}{\pi} = \inv{4}{\pi} = \inv{6}{\pi} = 0$, $\inv{3}{\pi} = \inv{5}{\pi} = 2$, and $\inv{7}{\pi} = 1$. Note that any inversion of $\pi$ is necessarily contained in a component of $\pi$.

\begin{definition}\label{def:inversion sequences}
Let $a = (a_1, \cdots, a_n)$ be a sequence of non-negative integers. We say that $a$ is an \emph{inversion sequence} if $a_i < i$ for all $i \in [n]$.
\end{definition}

The terminology \emph{inversion sequence} comes from the following classical result, of which we give a brief proof for completeness.

\begin{theorem}\label{thm:bij_perm_invseq}
The map $\mathrm{InvSeq} : \pi = \pi_1 \cdots \pi_n \mapsto \InvSeq{\pi} := \left( \inv{1}{\pi}, \cdots, \inv{n}{\pi} \right)$ is a bijection from the set of $n$-permutations to the set of inversion sequences of length $n$.
\end{theorem}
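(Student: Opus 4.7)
The plan is to prove bijectivity by constructing an explicit inverse map. First, I would check that $\mathrm{InvSeq}$ does land in the stated codomain: for any $\pi \in S_n$ and any $i \in [n]$, the quantity $\inv{i}{\pi}$ counts letters $j < i$ appearing to the right of $i$ in $\pi$, and since only $i-1$ such $j$ exist, we have $0 \leq \inv{i}{\pi} \leq i-1$, so $\InvSeq{\pi}$ is indeed an inversion sequence. Since both sets have cardinality $n!$ (the inversion-sequence count follows from the product rule, each $a_i$ ranging independently over an $i$-element set), it suffices to exhibit a right inverse $\Phi$ to $\mathrm{InvSeq}$ and verify $\mathrm{InvSeq} \circ \Phi = \mathrm{id}$.

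Given an inversion sequence $a = (a_1, \ldots, a_n)$, I would construct $\pi := \Phi(a)$ by the following insertion procedure. Starting from the empty word, for $i = 1, 2, \ldots, n$ in order, insert the letter $i$ into the current word (which has length $i-1$) at position $i - a_i$, so that exactly $a_i$ letters lie to its right. The condition $0 \leq a_i \leq i-1$ guarantees that $i - a_i \in \{1, \ldots, i\}$ is a valid position, and after $n$ insertions the output is a permutation of $[n]$.

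The main step is then to verify that $\inv{i}{\Phi(a)} = a_i$ for every $i \in [n]$. At the moment $i$ is inserted, the letters already present are precisely $1, 2, \ldots, i-1$, all smaller than $i$, and by construction exactly $a_i$ of them sit to the right of $i$. Each later insertion involves only a value $j > i$, and any such $j$ never contributes to $\inv{i}{\pi}$ no matter where it is placed. Combined with the fact that an insertion preserves the relative order of all existing letters, this shows that the $a_i$ letters less than $i$ originally placed to the right of $i$ remain to the right of $i$ in the final permutation, yielding $\inv{i}{\Phi(a)} = a_i$.

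The only delicate point, which I expect to be the main (and essentially only) obstacle, is this invariance claim: inserting larger letters at later stages must not disturb the inversion count of a smaller $i$. It is handled by the two observations above, namely that insertion preserves relative positions and that values greater than $i$ never contribute to $\inv{i}{\cdot}$. With $\mathrm{InvSeq} \circ \Phi = \mathrm{id}$ established on a set of size $n!$, the equality of cardinalities upgrades the right inverse to a two-sided inverse, so $\mathrm{InvSeq}$ is a bijection.
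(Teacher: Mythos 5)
Your proposal is correct and follows essentially the same route as the paper: both verify that $\InvSeq{\pi}$ lands in the set of inversion sequences, count that set as having $n!$ elements, and then build the inverse by the same insertion procedure (placing $i$ so that exactly $a_i$ letters lie to its right), using the cardinality match to conclude bijectivity. Your explicit justification that later insertions of larger letters do not disturb $\inv{i}{\cdot}$ is a slightly more detailed write-up of the step the paper dispatches with ``by construction,'' but the argument is the same.
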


\begin{proof}
Let $\pi = \pi_1 \cdots \pi_n \in S_n$, and $\InvSeq{\pi} := \left( \inv{1}{\pi}, \cdots, \inv{n}{\pi} \right)$. By definition, for any $i \in [n]$, $\inv{i}{\pi}$ is the number of $j < i$ such that $(i,j)$ is an inversion of $\pi$, which immediately implies that $\inv{i}{\pi} < i$, i.e.\ that $\InvSeq{\pi}$ is an inversion sequence. Moreover, it is straightforward to see that there are $n!$ inversion sequences of length $n$. Indeed, we have $1$ choice ($0$) for the entry $a_1$, $2$ choices ($0$ and $1$) for the entry $a_2$, and so on.

To show that the map $\mathrm{InvSeq}$ is a bijection, it is therefore sufficient to show that it is surjective. Let $a = (a_1, \cdots, a_n)$ be an inversion sequence. We construct a permutation $\pi$ as follows. Start with $i=0$ and the empty word $\pi^0 = \emptyset$. Suppose that after $i$ steps we have constructed a word $\pi^i = \pi^i_1 \cdots \pi^i_i \in S_i$ such that $\inv{j}{\pi^i} = a_j$ for all $j \in [i]$. Then at step $i+1$, we define $\pi^{i+1} = \pi^{i+1}_1 \cdots \pi^{i+1}_{i+1}$ by simply inserting $(i+1)$ in the permutation $\pi^i$ at index $i+1-a_{i+1}$ (i.e.\ $a_{i+1}$ spots starting from the right). By construction, we get a permutation $\pi^{i+1} \in S_{i+1}$ such that $\inv{j}{\pi^{i+1}} = a_j$ for all $j \in [i+1]$. Therefore, the final permutation $\pi^n \in S_n$ obtained satisfies $\InvSeq{\pi^n} = a$, as desired.
\end{proof}

\begin{example}\label{ex:inv_seq_to_perm}
Consider the inversion sequence $a = (0, 0, 1, 3, 2)$. We start with the empty word $\pi^0 = \emptyset$. At step $1$, we insert the letter $1$ in the empty word so that there are $a_1 = 0$ letters to its right, i.e.\ we get $\pi^1 = 1$ (we will always have this step). At step $2$, we have $a_2 = 0$, so we insert $2$ into the permutation $\pi^1 = 1$, so that there are $0$ letters to its right, yielding $\pi^2 = 12$. Then at step $3$, we have $a_3 = 1$. As such, we insert $3$ into the permutation $12$ so that it has $1$ letter to its right, yielding $\pi^3 = 132$.
At step $4$, we insert $4$ into $132$ so that it has $a_4 = 3$ letters to its right, yielding $\pi^4 = 4132$. Finally, at step $5$, we insert $5$ so that it has $a_5=2$ letters to its right, yielding the permutation $\pi = \pi^5 = 41532$. We can then check that $\InvSeq{\pi} = (0, 0, 1, 3, 2) = a$, as desired.
\end{example}

Inversion sequences are linked to the displacement vector of parking functions by the following result.

\begin{proposition}\label{pro:disp_inv_seq}
Let $p \in \PF{n}$ be a parking function. Then its displacement vector $\disp{p}$ is an inversion sequence.
\end{proposition}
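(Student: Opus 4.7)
The plan is to unpack the definitions and count carefully. Write $\pi = \OPF{n}{p}$ and $\disp{p} = (d_1, \ldots, d_n)$ with $d_i = \pi^{-1}_i - p_i$. We must verify both conditions implicit in Definition~\ref{def:inversion sequences}: first, that each $d_i$ is a non-negative integer, and second, that $d_i < i$ for all $i \in [n]$.

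The non-negativity is immediate from the classical parking rule: car $i$ drives forward from its preferred spot $p_i$ until it finds an unoccupied spot, so the spot $\pi^{-1}_i$ where it finally parks satisfies $\pi^{-1}_i \geq p_i$, i.e.\ $d_i \geq 0$.

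For the bound $d_i < i$, the key observation is a counting argument about which spots lie strictly between car $i$'s preference and its final position. By the definition of the classical parking process, since car $i$ parks in the \emph{first} unoccupied spot at or after $p_i$, every spot in the interval $[p_i, \pi^{-1}_i - 1]$ must be occupied at the moment car $i$ enters the car park. Since only cars with labels in $\{1, \ldots, i-1\}$ have entered by this point, each of these $d_i = \pi^{-1}_i - p_i$ spots is occupied by a distinct earlier car. Hence $d_i$ is at most the number of earlier cars, giving $d_i \leq i-1 < i$, as required.

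I don't anticipate any serious obstacle here: the argument is essentially a bookkeeping of the parking rule, and both inequalities fall out directly from the definition of the parking process without needing any of the more delicate machinery (blocking sequences, Hamiltonian paths, etc.) developed earlier in the paper. The only thing to be careful about is to state clearly why spots $p_i, p_i+1, \ldots, \pi^{-1}_i - 1$ are all occupied by strictly earlier cars, which follows from the ``first unoccupied spot'' rule together with the sequential ordering of arrivals.
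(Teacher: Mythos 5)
Your proof is correct and follows essentially the same argument as the paper: the spots $p_i, p_i+1, \ldots, \pi^{-1}_i - 1$ are all occupied when car $i$ arrives, and only the $i-1$ earlier cars can occupy them, giving $d_i \leq i-1 < i$. Your explicit check of non-negativity is a minor addition the paper leaves implicit, but the core counting argument is identical.
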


\begin{proof}
Let $p \in \PF{n}$, and write $\disp{p} = (d_1, \cdots, d_n)$ for its displacement vector. Fix $i \in [n]$. By definition, $d_i$ is the distance car $i$ travels from its preferred spot to its final parking spot. In the classical parking process, this means that spots $p_i, p_i+1, \cdots, p_i + d_i - 1$ must all be occupied when car $i$ enters the car park. Since there are $i-1$ cars entering before car $i$, and each of these occupies at most one of those spots, this implies that $d_i \leq i-1 < i$, as desired.
\end{proof}

We are now equipped to define the bijection from cyclic parking functions to permutation components. Given a cyclic parking function $p$, whose outcome is $\IncCyc{i}{n}$ for some $i \in [n]$, we define $\Psi(p)$ to be the component of $\pi$ containing $i$, where $\pi \in S_n$ is the unique permutation such that $\InvSeq{\pi} = \disp{p}$.

\begin{example}\label{ex:one_bij_cycPF_perm_comp}
Consider the parking function $p = (4, 4, 6, 6, 7, 9, 7, 1, 2, 1)$. By running the parking process, we get $\OPF{10}{p} = 89(10)1234567 = \IncCyc{8}{10}$. The displacement vector of $p$ is $\disp{p} = (0, 1, 0, 1, 1, 0, 3, 0, 0, 2)$. The corresponding permutation $\pi$ such that $\disp{p} = \InvSeq{\pi}$ is $\pi = 21/47536/(10)89$, and we map this to the component containing car $i = 8$, so $\Psi(p) = (10)89$.
\end{example}

\begin{example}\label{ex:all_bij_cycPF_perm_comp}
Here we fix $n=3$, and consider the images of all cyclic parking functions of length $n$ under the map $\Psi$. These are illustrated in Table~\ref{tab:all_bij_cycPF_perm_comp} below. We have partitioned cyclic parking functions (listed in Column~2) according to their outcome (in Column~1). Column~3 then gives the displacement vector of each parking function, with Column~4 listing the permutation whose inversion sequence is the displacement vector. Finally, in the component column (Column~5), we have copied in this permutation for clarity, with the corresponding component $\Psi(p)$ in \textbf{bold}.
\begin{table}[ht]
    \centering
    \begin{tabular}{c|c|c|c|c}
         Outcome & Parking function $p$ & $\disp{p}$ & Permutation $\pi$ & Component $c = \Psi(p)$ \\
         \hline
         \multirow{6}{2em}{$123$} & $(1, 1, 1)$ & $(0, 1, 2)$ & $321$ & $\mathbf{321}$\\
          & $(1, 1, 2)$ & $(0, 1, 1)$ & $231$ & $\mathbf{231}$\\
          & $(1, 1, 3)$ & $(0, 1, 0)$ & $213$ & $\mathbf{21}/3$\\
          & $(1, 2, 1)$ & $(0, 0, 2)$ & $312$ & $\mathbf{312}$\\
          & $(1, 2, 2)$ & $(0, 0, 1)$ & $132$ & $\mathbf{1}/32$\\
          & $(1, 2, 3)$& $(0, 0, 0)$ & $123$ & $\mathbf{1}/2/3$ \\
          \hline
         \multirow{2}{2em}{$231$} & $(3, 1, 1)$ & $(0, 0, 1)$ & $132$ & $1/\mathbf{32}$\\
          & $(3, 1, 2)$ & $(0, 0, 0)$ & $123$ & $1/\mathbf{2}/3$\\
          \hline
         \multirow{2}{2em}{$312$} & $(2, 2, 1)$ & $(0, 1, 0)$ & $213$ & $21/\mathbf{3}$\\
          & $(2, 3, 1)$ & $(0, 0, 0)$ & $123$ & $1/2/\mathbf{3}$\\
    \end{tabular}
    \caption{All cyclic parking functions of length $n=3$, with their corresponding permutation components.}
    \label{tab:all_bij_cycPF_perm_comp}
\end{table}
\end{example}

\begin{theorem}\label{thm:bij_cycPF_perm_comp}
The map $\Psi : \CycPF{n} \rightarrow \Comp{n}$ is a bijection from cyclic parking functions to permutation components. Moreover, for any cyclic parking function $p$ and any $i \in [n]$, the displacement of car $i$ in $p$ is equal to the inversion number of $i$ in the underlying permutation $\Perm{\Psi(p)}$.
\end{theorem}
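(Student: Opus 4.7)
The plan is to construct an explicit inverse to $\Psi$ using the reconstruction algorithm from the proof of Theorem~\ref{thm:bij_perm_invseq}. The second part of the statement is immediate from the definition: if $p \in \CycPF{n}$ and $\pi := \Perm{\Psi(p)}$, then $\InvSeq{\pi} = \disp{p}$ by construction, so for each $i \in [n]$, the displacement $d_i$ of car $i$ equals $\inv{i}{\pi}$.

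For the forward direction, I would show that $\Psi(p) \in \Comp{n}$ satisfies $\min \Psi(p) = i$, where $\OPF{n}{p} = \IncCyc{i}{n}$. The first step is to extract the constraints on $\disp{p} = (d_1, \ldots, d_n)$ by tracing the classical parking process as in the proof of Proposition~\ref{pro:outcome_enum_CycPF}: one finds $d_i = 0$ and, crucially, $d_j \leq j - i$ for every $j > i$. I would then plug these constraints into the inductive algorithm from the proof of Theorem~\ref{thm:bij_perm_invseq}, which reconstructs $\pi$ from $\InvSeq{\pi}$ by inserting, at step $k$, the value $k$ into $\pi^{k-1}$ so that $d_k$ values lie to its right. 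For every $k \geq i$, the bound $d_k \leq k - i$ forces $k$ to be inserted at position $\geq i$ of $\pi^k$, and since subsequent insertions can only push it further right, no value $\geq i$ ever lands in the first $i-1$ slots of $\pi$. Hence positions $\{1, \ldots, i-1\}$ of $\pi$ contain exactly the values $\{1, \ldots, i-1\}$, which forces the initial segment $\pi_1 \cdots \pi_{i-1}$ to decompose into complete components of $\pi$. The component of $\pi$ containing the value $i$ must therefore begin at position $i$, and so has $i$ as its minimum element, as required.

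For the inverse map, given $c \in \Comp{n}$ I would set $\pi := \Perm{c}$, $i := \min c$, and $(d_1, \ldots, d_n) := \InvSeq{\pi}$, then define $\Phi(c) = (p_1, \ldots, p_n)$ by $p_k := \sigma_k - d_k$, where $\sigma_k$ denotes the position of car $k$ in $\IncCyc{i}{n}$. The above insertion argument runs in reverse: because $c$ is a component of $\pi$ with minimum $i$, positions $\{1, \ldots, i-1\}$ of $\pi$ contain exactly $\{1, \ldots, i-1\}$, which immediately yields $\inv{i}{\pi} = 0$ and $\inv{j}{\pi} \leq j - i$ for $j > i$. These inequalities guarantee that each $p_k$ lies in $[n]$, and a direct verification of the classical parking process, mirroring the proof of Proposition~\ref{pro:outcome_enum_CycPF}, shows that the resulting $p$ is a parking function whose outcome is $\IncCyc{i}{n}$. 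Checking $\Psi \circ \Phi = \mathrm{id}_{\Comp{n}}$ and $\Phi \circ \Psi = \mathrm{id}_{\CycPF{n}}$ then reduces to bookkeeping, since both compositions trivially preserve the underlying permutation and the chosen cycle index $i$.

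The main obstacle is the claim in the forward direction that the value $i$ is the \emph{minimum} element of its component in $\pi$, rather than merely an element of it. The condition $\inv{i}{\pi} = 0$ alone is not enough: for instance $\pi = 3124$ satisfies $\inv{2}{\pi} = 0$, yet $2$ is contained in the component $312$ whose minimum is $1$. What does force the component to start at $i$ is the full family of bounds $d_j \leq j - i$ for all $j > i$, which together pin down where each value $\geq i$ can be inserted when rebuilding $\pi$ from its inversion sequence. Once this positional rigidity is established, the remainder of the bijection follows formally.
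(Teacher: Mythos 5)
Your proposal is correct and follows essentially the same route as the paper: both arguments rest on the key fact that $\min \Psi(p)$ equals the index $i$ of the increasing cycle (the paper's Lemma~\ref{lem:first_car_min_comp}), together with the explicit inverse $p_j = \sigma_j - \inv{j}{\pi}$, whose validity comes from the bound $\inv{j}{\pi} \leq j - i$ for $j \geq i$ obtained from the split of $\pi$ after position $i-1$. The only differences are cosmetic: you prove the key lemma directly, by tracking insertion positions in the reconstruction of $\pi$ from $\disp{p}$ to show every value $\geq i$ lands at a position $\geq i$, whereas the paper argues by contradiction (a smaller component minimum would force the component to split), and you deduce bijectivity from a two-sided inverse rather than separate injectivity and surjectivity arguments.
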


\begin{lemma}\label{lem:first_car_min_comp}
Let $p \in \CycPF{n}$ be a cyclic parking function with outcome $\OFPF{C_n}{p} = \IncCyc{i}{n}$, i.e.\ car $i$ ends up in spot $1$. Then we have $i = \min \Psi(p)$.
\end{lemma}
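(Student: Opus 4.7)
The plan is to analyze the inductive construction of $\pi$ from its inversion sequence $\InvSeq{\pi} = \disp{p}$ given in the proof of Theorem~\ref{thm:bij_perm_invseq}, and to show that the first $i - 1$ positions of $\pi$ already contain a permutation of $\{1, \ldots, i - 1\}$. Once this is established, the fact (recorded just before Theorem~\ref{thm:bij_perm_invseq}) that components partition the positions of $\pi$ into consecutive intervals will force the component of $\pi$ containing $i$ to start exactly at position $i$.

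The key input will be a sharper-than-generic bound on the displacements. Since $p \in \CycPF{n}$ has classical outcome $\IncCyc{i}{n}$, car $k$ ends up in spot $k - i + 1$ for every $k \ge i$, so $d_k = (k - i + 1) - p_k \le k - i$ using $p_k \ge 1$; in particular $d_i = 0$. I would then run the insertion procedure of Theorem~\ref{thm:bij_perm_invseq}: after step $i - 1$ the word $\pi^{i-1}$ is, by construction, some permutation of $\{1, \ldots, i - 1\}$. For each subsequent $k$ from $i$ to $n$, the value $k$ is inserted at position $k - d_k \ge i$, which leaves the first $i - 1$ entries of the current word untouched. Iterating up to $k = n$, the first $i - 1$ positions of $\pi = \pi^n$ coincide with $\pi^{i-1}$ and thus contain exactly the set $\{1, \ldots, i-1\}$.

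To finish, I would rule out any component of $\pi$ straddling positions $i - 1$ and $i$. A hypothetical component $\pi_a \cdots \pi_b$ with $a \le i - 1 < b$ would satisfy $\{\pi_a, \ldots, \pi_b\} = \{a, \ldots, b\}$; intersecting this with the first $i - 1$ positions (which contain exactly $\{1, \ldots, i - 1\}$) would force $\{\pi_a, \ldots, \pi_{i-1}\} = \{a, \ldots, i - 1\}$, contradicting the minimality-for-inclusion of the component $\pi_a \cdots \pi_b$. Hence the component $\Psi(p) = \pi_a \cdots \pi_b$ containing $i$ satisfies $a \ge i$, and combined with $i \in \{a, \ldots, b\}$ this forces $a = i$ and $\min \Psi(p) = a = i$.

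The main obstacle will be extracting the bound $d_k \le k - i$ for $k \ge i$: it is strictly sharper than the generic inversion-sequence bound $d_k \le k - 1$ from Proposition~\ref{pro:disp_inv_seq}, and this specific strengthening for cyclic outcomes is precisely what prevents later insertions from spilling into the first $i - 1$ positions of $\pi$. Everything else in the argument is bookkeeping on top of the construction of Theorem~\ref{thm:bij_perm_invseq}.
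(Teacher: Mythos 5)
Your argument is correct, and it takes a genuinely more direct route than the paper's. The paper proves the lemma by contradiction: assuming $i' := \min \Psi(p) < i$, it bounds the displacements of the members of the component $c = \Psi(p)$ (the small members via $d_j = \inv{j}{c}$, the large ones via their parking spots), and then appeals to the insertion construction of Theorem~\ref{thm:bij_perm_invseq} to conclude that $c$ would split as $c^1 \! \cdot \! c^2$ with $c^1$ a permutation of $\{i', \cdots, i-1\}$, contradicting minimality of the component. You instead work globally and directly: the sharp bound $d_k \leq k-i$ for $k \geq i$ (car $k$ parks in spot $k-i+1$ and $p_k \geq 1$) means that in the insertion procedure every value $k \geq i$ is inserted at index $k - d_k \geq i$, so the prefix $\pi_1 \cdots \pi_{i-1}$ is a permutation of $\{1, \cdots, i-1\}$; your straddling argument then correctly rules out a component $\pi_a \cdots \pi_b$ with $a \leq i-1 < b$, since $\pi_a \cdots \pi_{i-1}$ would itself have value set $\{a, \cdots, i-1\}$, violating minimality, and $a \geq i$ together with $i \in \{a, \cdots, b\}$ forces $\min \Psi(p) = a = i$. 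The underlying engine is the same (the displacement bound for cars $k \geq i$ plus the insertion construction), but your version avoids the contradiction setup and the paper's somewhat terse ``it is straightforward to check'' splitting step, and it establishes outright the decomposition $\pi = \pi^{<i} \! \cdot \! \pi^{\geq i}$ that the paper only uses in the converse direction in the surjectivity part of the proof of Theorem~\ref{thm:bij_cycPF_perm_comp}; in that sense your argument could streamline both proofs, while the paper's has the modest advantage of reasoning only about the component $c$ itself.
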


\begin{proof}
Let $p \in \CycPF{n}$ be a cyclic parking function, and $\disp{p} = (d_1, \cdots, d_n)$ its displacement vector. Define $\pi \in S_n$ to be the unique permutation such that $\InvSeq{\pi} = \disp{p}$. Denote $c := \Psi(p)$ the component of the permutation $\pi$ corresponding to the parking function $p$, and let $i' := \min c$ and $k = \vert c \vert$ be its minimal element and size respectively. By definition, the component $c$ is a permutation of the set $\{ i', \cdots, i'+k-1\}$. Moreover, by construction we have $d_j = \inv{j}{\pi} = \inv{j}{c}$ for any $j \in c$, since all inversions of the permutation $\pi$ are necessarily contained in its components.

Seeking contradiction, suppose that $i' < i$. We consider cars $i', i'+1, \cdots, i-1$ in the parking process, which are the $i-i'$ smallest elements of the component $c$. By the remarks above, we have $d_{i'+j-1} < j$ for all $j \in [i-i']$. Now consider cars $i, i+1, \cdots, i'+k-1$. By definition of $i$, these occupy spots $1, \cdots, k+i'-i$. In particular, we must have $\disp{i+j-1} < j$ for all $j \in [k+i'-i]$. These inequalities summarise as below.
$$
\begin{array}{rcccccc}
     \text{cars:} & i' & \cdots & i-1 & i & \cdots & i'+k-1 \\
     \hline
      & d_{i'} & \cdots & d_{i-1} & d_i & \cdots & d_{i'+k-1} \\
      \text{displacement:} & \wedge & \cdots & \wedge & \wedge & \cdots & \wedge \\
       & 1 & \cdots & i-i' & 1 & \cdots & k + i' - i \\
\end{array}
$$
Now recall that $\disp{p} = \InvSeq{\pi}$, and that $c$ is a sub-sequence in $\pi$ which is a permutation of the set $\{i' \cdots, i' + k-1\}$. It is straightforward, using the inverse construction from Theorem~\ref{thm:bij_perm_invseq}, to check that the inequalities above imply that $c$ can be written as $c = c^1 \! \cdot \! c^2$, where $c^1$, resp.\ $c^2$, is a permutation of the set $\{i', \cdots, i-1\}$, resp.\ of the set $\{i, \cdots, i'+k-1\}$. This contradicts the minimality of the component $c$.
\end{proof}

\begin{proof}[Proof of Theorem~\ref{thm:bij_cycPF_perm_comp}]
We first show that $\Psi$ is injective. For this, let $p, p'$ be two cyclic parking functions such that $c := \Psi(p) = \Psi(p')$. Let $i := \min c$. By Lemma~\ref{lem:first_car_min_comp}, we have that $\OFPF{n}{p} = \OFPF{n}{p'} = \IncCyc{i}{n}$. Moreover, by construction, we have $\disp{p} = \disp{p'} = \Perm{c}$. We claim that two parking functions which share the same outcome and displacement vector are necessarily equal. Indeed, the preference of any car is equal to its index in the outcome permutation minus its displacement (this is the definition of the displacement vector).

It remains to show that $\Psi$ is surjective, which we do by constructing its inverse. Let $c \in \comp{\pi}$ be a component of some permutation $\pi \in S_n$. Define $i := \min c$ to be its minimal element. Let $s = (s_1, \cdots, s_n) := \InvSeq{\pi}$ be the inversion sequence of $\pi$. For $j \in [n]$, denote $\overline{j}$ the index of $j$ in the cyclic permutation $\IncCyc{i}{n}$. That is, $\bar{j} = n+j+1-i$ if $j \leq i-1$, and $\bar{j} = j+1-i$ if $j \geq i$.

We claim that $s_j < \bar{j}$ for all $j \in [n]$. If $j \leq i-1$, this simply follows from the fact that $s$ is an inversion sequence: we have $\bar{j} \geq j +1$ and $s_j < j$. Now note that since $i$ is the minimal element in a component of the permutation $\pi$, this means in particular that $\pi$ can be decomposed as $\pi^{<i} \! \cdot \! \pi^{\geq i}$, where $\pi^{<i}$, resp.\ $\pi^{\geq i}$, is a permutation of the set $\{1, \cdots, i-1\}$, resp. $\{i, \cdots, n\}$. In particular, if $j \geq i$, all inversions $(j,j')$ of $\pi$ must be in the right-hand permutation $\pi^{ \geq i }$. This implies in particular that $s_j < j-(i-1) = \bar{j}$, as desired.

We can then define $p = (p_1, \cdots, p_n)$ by setting $p_j = \bar{j} - s_j$ for all $j \in [n]$. By the above claim, we have $p_j > 0 $ for all $j$, so $p$ is a parking preference. Moreover, by construction, we have $\OFPF{n}{p} = \IncCyc{i}{n}$, and $\Psi(p) = c$. This shows that $\Psi$ is surjective, and thus a bijection, as desired. The fact that $\Psi$ maps the displacement of a car to its inversion number in the underlying permutation is by construction: the permutation $\Perm{\Psi(p)}$ is defined so that $\InvSeq{\Perm{\Psi(p)}} = \disp{p}$.
\end{proof}

\begin{example}\label{ex:psi^-1}
We show an example of the construction of $\Psi^{-1}$ in the previous proof. Consider the permutation $\pi = 3412/\mathbf{7865}/9$, with component $c = 7865$. We first calculate the inversion sequence of $\pi$, yielding $\InvSeq{\pi} = (0, 0, 2, 2; 0, 1, 2, 2; 0)$ where we have used a semi-colon to separate the permutation components for clarity. The minimal element of $c$ is $i = 5$. This indicates that car $5$ must end up in spot $1$, cars $6$ to $9$ end up in spots $2$ to $5$, and cars $1$ to $4$ end up in spots $6$ to $9$. For each car $i$, we determine its parking preference $p_i$ by subtracting its inversion number (which will be equal to its displacement) from its final spot. For example, car $8$ ends up in spot $4$, and $8$ has inversion number $2$ in $\pi$, corresponding to the inversions $(8,6)$ and $(8,5)$, so we set $p_8 = 4 - 2 = 2$. We do this for each car, and finally get the parking preference $p = (6,7,6,7,1,1,1,2,5)$. We can check that $\OFPF{9}{p} = 567891234$, and that $\disp{p} = \InvSeq{\pi}$, as desired.
\end{example}

\begin{remark}\label{rem:inv_seq_disp}
Theorem~\ref{thm:bij_cycPF_perm_comp} implies a converse for Proposition~\ref{pro:disp_inv_seq}. Namely, if $a = (a_1, \cdots, a_n)$ is an inversion sequence, then there exists a cyclic parking function $p$ such that $\disp{p} = a$. In fact, the number of such cyclic parking functions is equal to the number of components of the permutation $\pi$ such that $a = \InvSeq{\pi}$. Indeed, given an inversion sequence $a$ with corresponding permutation $\pi$ (i.e.\ $\InvSeq{\pi} = a$), then the set of cyclic parking functions $p$ satisfying $\disp{p} = a$ is exactly the set $\Psi^{-1} \left( \comp{\pi} \right)$.
\end{remark}

%TODO later
\section{Conclusion and future work}\label{sec:future}

In this paper, we introduced a new variation on classical parking functions, called \emph{friendship parking functions}. 
These essentially follow classical parking rules, with the added restriction that a car can only park next to its ``friends'' (adjacent vertices in an underlying graph $G$). 
We gave a full characterisation of $G$-friendship parking functions according to their outcome, or equivalently Hamiltonian paths of $G$ (Theorem~\ref{thm:charact_FPF_outcome}). 
We then applied this result and its enumerative consequence to the case where $G$ is the cycle graph on $n$ vertices (Section~\ref{subsec:cycle}).
Finally, in Section~\ref{sec:cyclicPF} we returned to classical parking functions, considering \emph{cyclic parking functions} whose (classical) outcome is an increasing cycle of the form $\inccyc{i}{n}$. We exhibited (Theorem~\ref{thm:bij_cycPF_perm_comp}) a bijection between cyclic parking functions of \emph{permutation components}, which maps the \emph{displacement} of each car to its \emph{inversion} number in the corresponding permutation.

One natural question from this last work is to ask whether other families of permutations have natural combinatorial interpretations of their outcome fibres (whether for classical parking functions or one of the variations thereon). For example, in~\cite{SeligZhuMVP} it was shown that the outcome fibre of a permutation $\pi$ can be interpreted in terms of certain subgraphs of the corresponding permutation graph $G_{\pi}$, and various families of permutations were studied under that representation. It would be interesting to ask similar questions of classical parking functions.

Regarding friendship parking functions, a first question to consider would be to pursue the study of graph families. In this paper we studied the case of cycle graphs. We could also investigate other families such as cycle graphs with a different labelling, wheel graphs, complete graphs with one edge removed, and so on. We should look for both closed enumeration formulas and combinatorial interpretations of friendship parking functions in those cases.

Finally, we could also consider the case where the underlying graph $G$ does not have a Hamiltonian path. In this case, there will be no friendship parking functions allowing all cars to park, but we could consider \emph{defective} parking functions, as in the classical case~\cite{CamDefPF}. In other words, for some fixed $k > 0$, how many friendship parking functions are there where exactly $k$ cars fail to park? What are the minimal/maximal values of such a $k$? Of particular interest could be the case where $G$ is a disjoint union of cliques, i.e.\ where the friendship relationship is transitive. In other words, we partition the set $[n]$ into subsets $F_1, \cdots, F_j$ where any pair of elements in $F_{\ell}$ are friends, but there is no friendship between elements of distinct $F_{\ell}$ and $F_{\ell'}$. What can be said about friendship parking functions under this model?

%%%%%%%%%%%%%%%%%%%%%%%%%%%%%%%%%%%%
%%%%%%%%%%%%%ACKNOWLEDGMENTS%%%%%%%%%%
%%%%%%%%%%%%%%%%%%%%%%%%%%%%%%%%%%%%

\section*{Acknowledgments}

The research leading to these results was partially supported by the National Natural Science Foundation of China (NSFC) under Grant Agreement No 12101505, and by the Summer Undergraduate Research Fellowship programme at Xi'an Jiaotong-Liverpool University. The authors would like to thank Xiaoyi Zhang for helpful initial discussions. The authors have no competing interests to declare that are relevant to the content of this article.

%%%%%%%%%%%%%%%%%%%%%%%%%%%%%%%%%%%%
%%%%%%%%%%%%%BIBLIOGRAPHY%%%%%%%%%%%%%%%
%%%%%%%%%%%%%%%%%%%%%%%%%%%%%%%%%%%%

\bibliographystyle{plain}
\bibliography{FPF_bib}

\end{document}